\newcommand{\1}{\mathds{1}}
\newcommand{\0}{\mathds{O}}
\newcommand{\R}{\mathbb{R}}
\newcommand{\N}{\mathbb{N}}
\newcommand{\8}{\infty}
\newcommand{\Co}{\mathcal{C}}
\newcommand{\Fo}{\mathcal{F}}
\newcommand{\Go}{\mathcal{G}}
\newcommand{\Po}{\mathcal{P}}
\newcommand{\Io}{\mathcal{I}}
\renewcommand{\mid}{\::\:}
\newcounter{dummy} \numberwithin{dummy}{section}
\newtheorem{theorem}[dummy]{Theorem}
\newtheorem{lemma}[dummy]{Lemma}
\newtheorem{proposition}[dummy]{Proposition}
\newtheorem{corollary}[dummy]{Corollary}
\theoremstyle{remark}
\newtheorem{remark}[dummy]{Remark}
\newtheorem{example}[dummy]{Example}
\DeclareMathOperator{\supp}{supp}
\begin{document}

\title{Atomicity of Boolean algebras and vector lattices in terms of order convergence}
\author{Antonio Avil\'{e}s, Eugene Bilokopytov and Vladimir G. Troitsky}
\maketitle

\begin{abstract}
We prove that order convergence on a Boolean algebra turns it into a compact convergence space if and only if this Boolean algebra is complete and atomic. We also show that on an Archimedean vector lattice, order intervals are compact with respect to order convergence if and only the vector lattice is complete and atomic. Additionally we provide a direct proof of the fact that uo convergence on an Archimedean vector lattice is induced by a topology if and only if the vector lattice is atomic.


\end{abstract}

\section{Introduction}

It is well-known that order convergence on a lattice is rarely \emph{topological} i.e., induced by a topology (see, e.g., \cite{erne}), which greatly reduces the toolkit of methods available for studying it. In the case when the lattice is a Boolean algebra or a vector lattice, the order convergence satisfies the definition of a \emph{convergence space} (see e.g. \cite{bb}), which is a broader notion than that of a topological space. The theory of convergence spaces is well developed, and its applications to the vector lattice theory has gained some attention recently (see e.g., \cite{aeg}, \cite{erz0}, \cite{otv} and \cite{ectv}).

This note is dedicated to topologicity and compactness (in the sense of convergence spaces) of order convergence in Boolean algebras and vector lattices. Let us list some of the existing results on this topic. The topologicity of order convergence on a Boolean algebra has been shown to be equivalent to atomicity in \cite{erne}. Order convergence on an Archimedean vector lattice can only be topological if the vector lattice is isomorphic to $\R^{n}$ for some $n\in\N$ (see \cite{dem}). A Boolean algebra admits a compact Hausdorff topology compatible with Boolean operations if and only if it is complete atomic if and only if it is isomorphic to a power set of some set. See \cite[Chapter~VII, Proposition~1.16]{johnstone} for this and other equivalent conditions; see also  \cite{bh}, \cite{weber} and \cite{zsw}. It follows from \cite[Theorem~1]{lipecky} that a vector lattice admits a locally solid Hausdorff topology in which order intervals are compact if and only if it is Dedekind complete and atomic, which in turn is equivalent to being an ideal in the Cartesian product of copies of~$\R$.

We complement these results by showing that a Boolean algebra is compact with respect to order convergence if and only if it is complete and atomic, and it is pre-compact if and only if it is atomic. For vector lattices, we show that order intervals are compact with respect to order convergence if and only if the vector lattice is Dedekind complete and atomic, answering a question from \cite{obr}. Intervals are precompact if and only if the vector lattice is atomic if and only if order convergence on intervals is topological.

\section{Preliminaries}

Throughout this section, $P$ is a lattice. By an \emph{order interval} in $P$ we mean a set of the form $\left[p,q\right]:=\left\{r\in P\mid p\le r\le q\right\}$, where $p,q\in P$ satisfy $p\le q$. We will also use open and semi-open versions of this notation, but do not refer to the corresponding sets as order intervals.

Recall that $Q\subseteq P$ is an \emph{ideal} if $p,q\in Q$ and $r\le p$ imply $r,p\vee q\in Q$. Dually, $Q$ is called a \emph{filter} if $p,q\in Q$ and $r\ge p$ imply $r,p\wedge q\in Q$. Maximal filters (i.e., filters $\varnothing\ne Q\varsubsetneq P$ which are not contained in any filter other than $P$ and $Q$) are called \emph{ultrafilters}. If $P$ is a Boolean algebra, the complement of $p\in P$ is denoted $\overline{p}$. We will need the following well-known fact in the sequel.

\begin{lemma}\label{ultr}
For any proper ideal $R$ in a Boolean algebra $P$ there is an ultrafilter $Q\subseteq P\backslash R$.
\end{lemma}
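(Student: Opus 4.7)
My plan is to apply Zorn's lemma to the collection $\Cco$ of all filters of $P$ that are disjoint from $R$, ordered by inclusion, and then show that any maximal element of $\Cco$ is automatically an ultrafilter contained in $P\setminus R$. First, $\Cco$ is nonempty: since $R$ is proper, $\1\notin R$, so the singleton filter $\{\1\}$ lies in $\Cco$. A standard argument shows that every chain in $\Cco$ has an upper bound in $\Cco$ (the union of a nonempty chain is again a filter disjoint from $R$), so Zorn's lemma produces a maximal element $Q\in\Cco$.

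Next I would verify that $Q$ is an ultrafilter. Properness is immediate: since $R$ is a nonempty ideal we have $\0\in R$, while $Q\cap R=\varnothing$, so $Q\neq P$. For maximality among proper filters I plan to verify the classical dichotomy that for every $p\in P$, either $p\in Q$ or $\overline{p}\in Q$. Suppose to the contrary that neither belongs to $Q$. Then maximality of $Q$ in $\Cco$ forces the filter generated by $Q\cup\{p\}$ to meet $R$, producing $q_{1}\in Q$ with $q_{1}\wedge p\in R$; symmetrically one finds $q_{2}\in Q$ with $q_{2}\wedge\overline{p}\in R$.

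Setting $q:=q_{1}\wedge q_{2}\in Q$, distributivity gives
\[
q = q\wedge(p\vee\overline{p}) = (q\wedge p)\vee(q\wedge\overline{p}) \le (q_{1}\wedge p)\vee(q_{2}\wedge\overline{p}),
\]
and the right-hand side lies in $R$ since $R$ is an ideal; hence $q\in R$, contradicting $Q\cap R=\varnothing$. The dichotomy then implies ultrafilterness of $Q$ by the standard reasoning that any strictly larger filter would contain both some element and its complement, and hence equal $P$. The only step requiring real care is the final distributivity calculation, which is precisely what fuses the two ``witnesses'' from the dichotomy into a single element lying in both $Q$ and $R$; everything else is a routine Zorn's lemma argument.
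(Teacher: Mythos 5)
Your proof is correct, and it takes the dual route to the paper's. The paper extends $R$ to a maximal proper ideal $R'$ (Zorn's lemma applied to the proper ideals containing $R$) and then sets $Q:=\left\{p\mid \overline{p}\in R'\right\}$, asserting without computation that this is a maximal filter; it then checks $Q\cap R'=\varnothing$. You instead run Zorn's lemma on the filter side, maximizing among filters disjoint from $R$ itself, and then explicitly establish the dichotomy ``$p\in Q$ or $\overline{p}\in Q$'' via the distributivity estimate $q\le\left(q_{1}\wedge p\right)\vee\left(q_{2}\wedge\overline{p}\right)\in R$. The mathematical content is essentially the same: the ``easy to see'' step in the paper is precisely the primeness of a maximal proper ideal, whose verification is the mirror image of your computation; your version has the merit of making that key step explicit and of working with $R$ directly rather than through an enlargement. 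One small caveat: your properness argument for $Q$ invokes $0\in R$, which presupposes $R\ne\varnothing$ (the paper's definition of ideal technically admits the empty set, and for $R=\varnothing$ the maximal element of your family would be $P$ itself); replacing $R$ by $\left\{0\right\}$ in that degenerate case fixes this, so it is a cosmetic point rather than a gap.
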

\begin{proof}
First, note that an ideal $R\subseteq P$ is proper iff it does not contain the greatest element $1$ of $P$. Therefore, the collection of all proper ideals in $P$ satisfies the conditions of Zorn's lemma. Let $R'$ be a maximal proper ideal of $P$ which contains $R$. It is easy to see that $Q:=\left\{p,~ \overline{p}\in R'\right\}$ is a maximal filter in $P$. Moreover, $Q\cap R'=\varnothing$, since if $p\in Q\cap R'$, then $p,\overline{p}\in R'$, hence $1=p\vee \overline{p}\in R'$, contradiction. Thus, $Q\subseteq P\backslash R'\subseteq P\backslash R$.
\end{proof}

If $\Omega$ is a set, by a filter on $\Omega$ we mean a filter in the power set $\Po\left(\Omega\right)$. We remind that if $\Omega$ is endowed with a topology, a filter on $\Omega$ is said to \emph{converge} to $\omega\in\Omega$ if it contains all neighborhoods of~$\omega$. In particular, the intersection of any collection of filters that converge to $\omega$ also converges to~$\omega$.

A net $\left(p_{\alpha}\right)_{\alpha\in A}$ in $P$ \emph{order converges} to $p\in P$ (denoted $p_{\alpha}\to p$) if there exist an increasing net $\left(q_{\beta}\right)_{\beta\in B}$ and a decreasing net $\left(r_{\beta}\right)_{\beta\in B}$ such that $\bigvee\limits_{\beta\in B}q_{\beta}=p=\bigwedge\limits_{\beta\in B}r_{\beta}$ and for every $\beta\in B$ there is $\alpha_{0}\in A$ such that $q_{\beta}\le p_{\alpha}\le r_{\beta}$, for every $\alpha\ge\alpha_{0}$. One can show that order limits are unique. Clearly, if $\left(p_{\alpha}\right)_{\alpha\in A}$ is increasing then $p_{\alpha}\to p$ iff $p=\bigvee p_{\alpha}$ (and the same for the infimum).

A filter $\Fo$ on $P$ \emph{order converges} to $p$ if it contains a collection of order intervals in $P$, whose intersection is equal to $\left\{p\right\}$. The reason why the term ``order convergence'' is used in two contexts is that these are two manifestations of order convergence, one in the language of nets, and the other -- in the language of filters. Namely, a filter is order convergent to $p$ iff it is generated by the tails of a net which order converges to $p$ (a \emph{tail} of a net $\left(p_{\alpha}\right)_{\alpha\in A}$ is a set of a form $\left\{p_{\alpha}\right\}_{\alpha\ge\alpha_{0}}$, for some $\alpha_{0}\in A$; we will also use the term ``tail'' for the net  $\left(p_{\alpha}\right)_{\alpha\ge\alpha_{0}}$, the ambiguity will be cleared up by the context). For further details on the correspondence between these languages, see in \cite{otv}. We say that order convergence is \emph{topological} if there is a topology on $P$ such that order convergence is induced by this topology. For a characterization of lattices in which order convergence is topological see~\cite{erne}.

We say that $P$ is \emph{order compact} if it is compact with respect to the order convergence structure (see \cite[Section~1.4]{bb}), i.e., if every ultrafilter on $P$ is order convergent to an element of~$P$. In the language of nets this translates to the following property: every net in $P$ has an order convergent subnet. For a proof of the equivalence of these properties see \cite[p.~246 and p.~258]{otv}. It is easy to see that a subnet of an order convergent net order converges to the same limit. We will say that $P$ is \emph{order locally compact} if it is locally compact with respect to the order convergence structure, i.e., if every order convergent filter in $P$ contains an order compact set. In the language of nets this translates to the property that every order convergent net has a tail contained in an order compact set. Clearly, order compactness implies order local compactness. We start with some general observations.

\begin{lemma}Let $I\subseteq P$ be an order interval. Then:
\item[(i)] If $\left(p_{\alpha}\right)_{\alpha\in A}\subseteq I$ and $p_{\alpha}\to p$ then $p\in I$.
\item[(ii)] Order convergence in $I$ agrees with order convergence in $P$. That is, if $\left(p_{\alpha}\right)_{\alpha\in A}\subseteq I$ and $p\in I$, then $p_{\alpha}\to p$ in $P$ iff $p_{\alpha}\to p$ in~$I$.
\item[(iii)] If $P$ is order compact, then so is~$I$.
\end{lemma}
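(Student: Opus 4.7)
Write $I=[a,b]$. For (i), unpack the definition of $p_\alpha\to p$ to obtain an increasing net $(q_\beta)$ with $\bigvee_\beta q_\beta=p$, a decreasing net $(r_\beta)$ with $\bigwedge_\beta r_\beta=p$, and for each $\beta$ some $\alpha_0$ with $q_\beta\le p_\alpha\le r_\beta$ whenever $\alpha\ge\alpha_0$. Since each $p_\alpha$ lies in $[a,b]$, evaluating at $\alpha=\alpha_0$ yields $q_\beta\le b$ and $r_\beta\ge a$ for every $\beta$, so $p=\bigvee q_\beta\le b$ and $p=\bigwedge r_\beta\ge a$, whence $p\in I$.

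For (ii), the forward direction is handled by truncating the witnesses: replace $(q_\beta),(r_\beta)$ by $q_\beta':=a\vee q_\beta$ and $r_\beta':=b\wedge r_\beta$. These lie in $I$ (using $q_\beta\le p\le b$ from (i), and dually $a\le p\le r_\beta$), remain monotone, retain the correct extrema (since $a\le p\le b$ gives $\bigvee q_\beta'=a\vee p=p$ and $\bigwedge r_\beta'=b\wedge p=p$), and continue to sandwich the tail of $(p_\alpha)$ because $a\le p_\alpha\le b$. For the reverse direction, witnesses $(q_\beta),(r_\beta)\subseteq I$ for convergence in $I$ also serve as witnesses in $P$; the only nontrivial check is that the supremum of $(q_\beta)$ computed in $I$ agrees with its supremum in $P$. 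If $s\in P$ is any upper bound of $(q_\beta)$, then $s\wedge b\in I$ (since $s\ge q_\beta\ge a$ for any fixed $\beta$) is again an upper bound of $(q_\beta)$, hence $s\wedge b\ge p$, so $s\ge p$. The infimum case is dual.

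Part (iii) then follows at once: a net $(p_\alpha)\subseteq I$ is a net in $P$, so order compactness of $P$ produces an order convergent subnet with limit $p\in P$; by (i), $p\in I$, and by (ii), the subnet is order convergent to $p$ inside $I$. The main obstacle is the reverse direction of (ii), which forces one to verify that the passage from $P$-extrema to $I$-extrema does not change the value of a supremum or infimum; this is resolved by the short lattice calculation above, where the truncation $s\mapsto s\wedge b$ (dually $s\mapsto s\vee a$) brings an arbitrary $P$-bound back into $I$ without affecting the bounding property.
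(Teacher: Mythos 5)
Your proof is correct, and the truncation argument ($q_\beta\mapsto a\vee q_\beta$, $s\mapsto s\wedge b$) together with the verification that suprema computed in $I$ and in $P$ agree is exactly the routine check the authors had in mind; the paper states this lemma without proof, treating it as a general observation. All the individual steps check out, including the lattice identity $\bigvee_\beta(a\vee q_\beta)=a\vee\bigvee_\beta q_\beta$ in the case $a\le p$, and the use of the net characterization of order compactness in part (iii).
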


Note that the lemma is still valid if $I$ is an ``unbounded order interval'', i.e., a set of the form $\left\{p\in P\mid p\le q\right\}$ or $\left\{p\in P\mid p\ge q\right\}$, for $q\in P$.\medskip

Since a subnet of an increasing net is increasing and has the same supremum as the original net, we have the following result.

\begin{lemma}
If a monotone net in $P$ has an order convergent subnet then it converges to the same limit.
\end{lemma}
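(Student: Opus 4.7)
The plan is to reduce to the increasing case by duality, then exploit the already-cited fact that order convergence of a monotone net is just equality with the supremum (or infimum).

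So first, assume without loss of generality that the net $(p_{\alpha})_{\alpha\in A}$ is increasing; the decreasing case is entirely dual. Let $(p_{\alpha_{\gamma}})_{\gamma\in\Gamma}$ be an order convergent subnet, with order limit $p$. Since a subnet of an increasing net is increasing (as noted just before the statement), the subnet $(p_{\alpha_{\gamma}})$ is itself increasing, and then the remark from the preliminaries on monotone nets gives $p=\bigvee_{\gamma\in\Gamma}p_{\alpha_{\gamma}}$.

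Next, I would check that this supremum agrees with $\bigvee_{\alpha\in A}p_{\alpha}$. For one direction: by the cofinality property of subnets, for every $\alpha\in A$ there exists $\gamma\in\Gamma$ with $\alpha_{\gamma}\ge\alpha$, and since $(p_{\alpha})$ is increasing we get $p_{\alpha}\le p_{\alpha_{\gamma}}\le p$; so $p$ is an upper bound of the original net. For the other direction, any upper bound of the original net is automatically an upper bound of the subnet, hence dominates $p$. Thus $p=\bigvee_{\alpha\in A}p_{\alpha}$, and invoking the same remark again (in the other direction) yields $p_{\alpha}\to p$.

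There is no real obstacle; the only thing to be careful about is not to confuse the two indexing directions, and to remember that a ``subnet'' in the Willard/Kelley sense still retains cofinality in the original directed set, which is exactly what makes the two suprema coincide. Once this cofinality is used the argument is essentially a one-line application of the preceding remark.
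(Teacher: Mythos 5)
Your proof is correct and follows exactly the route the paper intends: it combines the remark that an increasing net order converges iff it equals its supremum with the observation (stated just before the lemma) that a subnet of an increasing net is increasing and has the same supremum. You merely spell out the cofinality argument behind ``same supremum,'' which the paper leaves implicit.
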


\begin{proposition}\label{completel}
If $P$ is order compact, then it is a complete lattice.
\end{proposition}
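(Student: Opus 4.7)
The plan is to show directly that every subset $S\subseteq P$ admits both a supremum and an infimum in $P$. Since $P$ is a lattice, finite joins and meets of elements of $S$ exist and form natural monotone nets whose order limits, if they exist, must coincide with $\sup S$ and $\inf S$. The key observation is that order compactness supplies an order convergent subnet of any net, and then the preceding lemma on monotone nets promotes this to order convergence of the whole monotone net. In other words, every monotone net in an order compact lattice is automatically order convergent.

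To execute this for suprema, let $S\subseteq P$ be nonempty, direct the collection $\mathrm{Fin}(S)$ of nonempty finite subsets of $S$ by inclusion, and form the net $(p_F)_{F\in\mathrm{Fin}(S)}$ with $p_F:=\bigvee F$. This is an increasing net in $P$, so by order compactness it has an order convergent subnet, and then by the preceding lemma the full net order converges to some $p\in P$. Since an increasing net order converges precisely to its supremum, $p=\bigvee_{F\in\mathrm{Fin}(S)} p_F$, and this coincides with $\sup S$: each $s\in S$ satisfies $s=p_{\{s\}}\le p$, while any upper bound of $S$ dominates every $p_F$ and therefore $p$. The existence of $\inf S$ is obtained dually from the decreasing net $F\mapsto\bigwedge F$ of finite meets.

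For the empty set, $\sup\emptyset$ and $\inf\emptyset$ are by convention the least and greatest elements of $P$ respectively, and both are furnished by applying the constructions above with $S=P$. We conclude that every subset of $P$ has a supremum and an infimum, so $P$ is a complete lattice. There is no real obstacle here: the result is an essentially immediate consequence of the two preceding lemmas, the only substantive step being the recognition that one should test order compactness against the monotone net of finite joins (or meets) of the set whose supremum (or infimum) one wants to exhibit.
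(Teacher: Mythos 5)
Your proof is correct and follows essentially the same route as the paper: both form the increasing net of finite joins of the given subset, use order compactness to extract a convergent subnet, invoke the lemma on monotone nets to upgrade this to convergence of the whole net, and identify the limit with the supremum (dually for infima). The only cosmetic difference is that you index the net by finite subsets ordered by inclusion while the paper indexes the set of finite joins by itself, and you additionally spell out the empty-set case.
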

\begin{proof}
For $Q\subseteq P$ consider the set $Q^{\vee}:=\left\{q_{1}\vee...\vee q_{n},~ q_{1},..., q_{n}\in Q,~n\in\N\right\}$, which can be viewed as an increasing net indexed by itself. Since $P$ is order compact, this net contains an order convergent subnet, hence order converges itself, therefore has a supremum, which also serves as the supremum of~$Q$. Consequently, every subset of $P$ has a supremum, and analogously every subset of $P$ has an infimum, thus $P$ is a complete lattice.
\end{proof}

It is well-known that if $P$ is a complete lattice then order convergence of nets can be described as follows: a net $\left(p_{\alpha}\right)_{\alpha\in A}$ order converges to $p\in P$ iff $p=\bigvee\limits_{\alpha}\bigwedge\limits_{\beta\ge \alpha}p_{\beta}=\bigwedge\limits_{\alpha}\bigvee\limits_{\beta\ge \alpha}p_{\beta}$. Moreover, in the general case, order convergence on a lattice $P$ is the restriction of the order convergence on its Dedekind-MacNeille completion (see e.g. \cite[Theorem~3]{acw}, \cite[Theorem~2.5]{pap1}). If $P$ is an Archimedean vector lattice then order convergence on $P$ is the restriction of the order convergence on its Dedekind completion (see \cite[Proposition~1.5]{ab}). We remark here that Dedekind-MacNeille completion of a vector lattice is the Dedekind completion supplemented by $\pm\8$, so formally the latter result is not a special case of the one mentioned in the previous sentence. The following result is known, but we include it for completeness.

\begin{proposition}\label{pointwise}
Let $\Omega$ be a set, $\R^{\Omega}$ the space of all real-valued functions on $\Omega$, and $P$ an order interval in $\R^{\Omega}$. Then order convergence on $P$ coincides with pointwise convergence (and so is compact and topological).
\end{proposition}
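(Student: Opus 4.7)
The plan is to identify $P=[f,g]$ with the product $\prod_{\omega\in\Omega}[f(\omega),g(\omega)]$ and reduce order convergence to coordinatewise convergence. First I would observe that $\R^{\Omega}$ is a Dedekind complete vector lattice in which suprema and infima are computed pointwise; consequently the order interval $P$ is itself a complete lattice, again with pointwise sup and inf. By the characterization of order convergence in complete lattices recalled in the paragraph preceding the statement, a net $(p_{\alpha})_{\alpha\in A}\subseteq P$ order converges to $p\in P$ if and only if
$$p=\bigvee_{\alpha}\bigwedge_{\beta\ge\alpha}p_{\beta}=\bigwedge_{\alpha}\bigvee_{\beta\ge\alpha}p_{\beta}.$$

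Second, since suprema and infima in $P$ are computed coordinatewise, the displayed equality holds in $P$ if and only if for every $\omega\in\Omega$
$$p(\omega)=\sup_{\alpha}\inf_{\beta\ge\alpha}p_{\beta}(\omega)=\inf_{\alpha}\sup_{\beta\ge\alpha}p_{\beta}(\omega),$$
and, since the scalar net $(p_{\alpha}(\omega))$ lies in the bounded interval $[f(\omega),g(\omega)]$, this in turn is equivalent to $p_{\alpha}(\omega)\to p(\omega)$ in $\R$. Thus order convergence on $P$ coincides with pointwise convergence.

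Finally, I would remark that pointwise convergence is exactly the convergence induced by the product topology on $\R^{\Omega}$ restricted to $P$, so order convergence on $P$ is topological; and $P$ is the product $\prod_{\omega\in\Omega}[f(\omega),g(\omega)]$ of compact intervals, hence compact by Tychonoff's theorem. I do not foresee any real obstacle; the only point requiring care is the identification of the order-theoretic operations in $P$ with pointwise operations, which is immediate because $P$ is an order interval in the Dedekind complete lattice $\R^{\Omega}$.
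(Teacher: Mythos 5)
Your proposal is correct and follows essentially the same route as the paper: both establish that $P$ is a complete lattice with pointwise suprema and infima, apply the $\liminf=\limsup$ characterization of order convergence in complete lattices coordinatewise, and then identify $P$ with the product $\prod_{\omega\in\Omega}\left[f\left(\omega\right),g\left(\omega\right)\right]$, which is compact in the product (pointwise) topology. No issues.
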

\begin{proof}
Since the supremum (infimum) of any collection of functions is their pointwise supremum (infimum), it follows that $P$ is a complete lattice, and so for a net $\left(f_{\alpha}\right)_{\alpha\in A}\subseteq P$ and $f\in P$ we have $f_{\alpha}\to f$ iff $f=\bigvee\limits_{\alpha}\bigwedge\limits_{\beta\ge \alpha}f_{\beta}=\bigwedge\limits_{\alpha}\bigvee\limits_{\beta\ge \alpha}f_{\beta}$ which is equivalent to $f\left(\omega\right)=\bigvee\limits_{\alpha}\bigwedge\limits_{\beta\ge \alpha}f_{\beta}\left(\omega\right)=\bigwedge\limits_{\alpha}\bigvee\limits_{\beta\ge \alpha}f_{\beta}\left(\omega\right)$, for every $\omega\in\Omega$, i.e., $f$ is the pointwise limit of $\left(f_{\alpha}\right)_{\alpha\in A}$.

Note that pointwise topology on $\R^{\Omega}$ is the product topology. Hence, if $P=\left[f,g\right]$ for some $f,g\in\R^{\Omega}$ then $P=\prod\limits_{\omega\in\Omega}\left[f\left(\omega\right),g\left(\omega\right)\right]$ is a compact topological space.
\end{proof}

\begin{remark}\label{pintwise}
Similarly to the preceding proposition, order convergence on $\Po\left(\Omega\right)\simeq \left\{0,1\right\}^{\Omega}$ is pointwise or product topology, in particular it is compact.
\qed\end{remark}

Now assume that $P$ is either a Boolean algebra or an interval in a vector lattice. In the case of Boolean algebra we will use the ``$-$'' symbol for the symmetric difference. For $A,B\subset P$ we denote $A-B:=\left\{a-b,~a\in A,~b\in B\right\}$. A net $\left(p_{\alpha}\right)_{\alpha\in A}\subseteq P$ is called \emph{order Cauchy} if the double indexed net $\left(p_{\alpha}-p_{\beta}\right)_{\alpha,\beta\in A}$ is order null, i.e., $p_{\alpha}-p_{\beta}\to 0$. It follows from \cite[Corollary~6.5]{et} and \cite[Proposition~6.1]{pap1} that this condition implies existence of a proper order interval in $P$ which contains a tail of $\left(p_{\alpha}\right)_{\alpha\in A}$.

A filter $\Fo$ on $P$ is \emph{order Cauchy} if the filter $\Fo-\Fo$ generated by the family\linebreak $\left\{A-B,~ A,B\in\Fo\right\}$ is order null. Note that if $\Fo$ is generated by the tails of a net $\left(p_{\alpha}\right)_{\alpha\in A}$, then the filter $\Fo-\Fo$ is generated by the tails of $\left(p_{\alpha}-p_{\beta}\right)_{\alpha,\beta\in A}$. Hence, a filter is order Cauchy iff it is generated by the tails of an order Cauchy net. It follows from the preceding paragraph that every order Cauchy filter contains a proper interval in~$P$.

Clearly, every order convergent net or filter is order Cauchy. It is known that a Boolean algebra or an order interval in an Archimedean vector lattice is a complete lattice iff it is order complete, i.e., every order Cauchy net (filter) is convergent (see \cite[Proposition~9.10]{otv} or \cite[Theorem~6.8]{pap1}; the proof for Boolean algebras is similar). Recall that a vector lattice is called \emph{Dedekind complete} if order intervals in it are complete lattices.

We will call $P$ \emph{order pre-compact} if every ultrafilter on $P$ is order Cauchy. Using the same arguments as for the characterization of order compactness one can show that $P$ is order pre-compact iff every net in $P$ has an order Cauchy subnet. It follows from Proposition \ref{completel} that $P$ is order compact iff it is order complete and order pre-compact.

Finally, $P$ is called \emph{order locally pre-compact} if every convergent net has an order pre-compact tail, or equivalently, every convergent filter in $P$ contains an order pre-compact set. Order local pre-compactness follows from order pre-compactness, as well as from order local compactness. All the three properties are inherited by order intervals.

Note that Boolean operations are continuous with respect to order convergence, hence a Boolean algebra is a convergence group. Therefore, an order locally compact Boolean algebra is complete with respect to order convergence (see \cite[Proposition~3.1.14]{bb}), and so is a complete lattice.

\section{The case of a Boolean algebra}

Throughout this section, $A$ is a Boolean algebra with the least and greatest elements $0$ and $1$, respectively. We call $a,b\in A$ \emph{disjoint}, and write $a\bot b$ if $a\wedge b=0$. Note that any order interval in $A$ is a Boolean algebra with respect to the induced order structure.

Recall that $a\in A$ is an \emph{atom} if $a>0$ and $\left[0,a\right]=\left\{0,a\right\}$. We will say that $A$ is \emph{atomless} if it contains no atoms, and \emph{atomic} if $\left[0,b\right]$ contains an atom for every $b>0$.

$A$ is atomic iff its completion is isomorphic to the powerset $\Po\left(\Omega\right)$ for some set $\Omega$ (see \cite[35.2 Example~A]{sikorski}). In particular, $A$ is complete and atomic iff it is isomorphic to $\Po\left(\Omega\right)$ for some set~$\Omega$.

If $A$ is atomless, and $a>0$ then there exists $b\in\left(0,a\right)$; then $b$ and $a-b$ are disjoint and non-zero, and $a=\left(a-b\right)\vee b$. Furthermore, if $c<d$, then $\left[c,d\right]$ is an atomless Boolean algebra. Note that if $A$ is not atomic then there is $a>0$ such that $\left[0,a\right]$ contains no atoms, and so is an atomless Boolean algebra.

\begin{lemma}\label{atomless}
If $A$ is atomless then it cannot be covered by a finite collection of proper (i.e., not equal to $A$) order intervals.
\end{lemma}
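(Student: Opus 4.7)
The plan is to argue by contradiction: supposing $A = \bigcup_{i=1}^n \left[c_i, d_i\right]$ with each $\left[c_i, d_i\right]$ proper, I will produce an $x \in A$ that lies outside every one of these intervals. The key idea is to reduce to the finite Boolean subalgebra generated by the endpoints of the intervals, and then to exploit atomlessness of $A$ to simultaneously violate, atom-by-atom of this subalgebra, the membership requirements imposed by each interval.

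First I would let $B \subseteq A$ denote the Boolean subalgebra generated by $\{c_1, d_1, \ldots, c_n, d_n\}$. Since $B$ is finitely generated, it is finite, hence atomic, and I denote its atoms by $e_1, \ldots, e_m$; these are pairwise disjoint in $A$ and satisfy $\bigvee_j e_j = 1$, and each $c_i$ (respectively $d_i$) is the join of those atoms $e_j$ with $e_j \le c_i$ (respectively $e_j \le d_i$). Parametrizing an arbitrary $x \in A$ by the ``traces'' $x_j := x \wedge e_j$, one has $x = \bigvee_j x_j$, and a short calculation shows that $x \in \left[c_i, d_i\right]$ is equivalent to the conjunction of $x_j = e_j$ for every $j$ with $e_j \le c_i$, together with $x_j = 0$ for every $j$ with $e_j \wedge d_i = 0$. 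Since $\left[c_i, d_i\right]$ is proper we have $c_i > 0$ or $d_i < 1$, so for each $i$ at least one such requirement is nontrivial.

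The construction of the missing element is then immediate. Because $A$ is atomless, each interval $\left[0, e_j\right]$ is itself an atomless Boolean algebra (as noted in the paragraph preceding the lemma), so for every $j$ I can pick some $x_j$ with $0 < x_j < e_j$; setting $x := \bigvee_j x_j$ yields $x \wedge e_j = x_j$, which is neither $0$ nor $e_j$ for any $j$, so $x$ violates every constraint of every $\left[c_i, d_i\right]$ and therefore lies in no interval of the cover, a contradiction. The main obstacle I anticipate is the bookkeeping in the middle step, specifically the verification that $c_i \le x$ (respectively $x \le d_i$) reduces atom-by-atom to the conditions on the $x_j$; this is routine but relies on $c_i$ and $d_i$ being exactly the joins of the atoms of $B$ below them, together with the pairwise disjointness of the $e_j$. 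Everything else amounts to choosing ``halves'' on each atom.
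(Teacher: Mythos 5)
Your proof is correct and takes essentially the same approach as the paper: both pass to the atoms $e_1,\dots,e_m$ of the finite subalgebra generated by the interval endpoints, use atomlessness to select a nontrivial piece $0 < x_j < e_j$ of each atom (the paper writes $u_k = v_k \vee w_k$), and show that the join of these pieces lies in no proper interval of the cover. The only cosmetic difference is that you conclude by exhibiting an element missed by every interval, whereas the paper argues that any interval containing that element must equal all of $A$.
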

\begin{proof}
Assume that there are $a_{1},...,a_{n},b_{1},...,b_{n}$ in $A$ such that $a_{k}\le b_{k}$ for every $k=1,...,n$ and $A=\left[a_{1},b_{1}\right]\cup...\cup \left[a_{n},b_{n}\right]$. Note that it then follows that one of $a_{k}$'s is equal to~$0$, and one of $b_{k}$'s is equal to~$1$. There is a finite collection $U=\left\{u_{1},...,u_{m}\right\}\subseteq A$ of disjoint non-zero elements such that every $a_{k}$ and $b_{k}$ can be represented as the supremum of a subset of $U$ (and so in particular $u_{1}\vee...\vee u_{m}=1$). Since $A$ is atomless, there are disjoint non-zero $v_{1},...,v_{m},w_{1},...,w_{m}\subseteq A$ such that $u_{k}=v_{k}\vee w_{k}$, for every $k=1,...,m$. Then, there is $l\in 1,...,n$ such that $v:=v_{1}\vee...\vee v_{m}\in \left[a_{l},b_{l}\right]$. Since $a_{l}$ is the supremum of a subcollection of~$U$, but $v\bot w_{k}$, hence $v\not\ge u_{k}$, for every $k=1,...,m$, it follows that $a_{l}=0$. Similarly, since $b_{l}$ is the supremum of a subcollection of~$U$, and $b_{l}\ge v_{k}$, for every $k=1,...,m$, it follows that $b_{l}= u_{1}\vee...\vee u_{m}=1$. Thus, $\left[a_{l},b_{l}\right]=A$.
\end{proof}

We are now ready to characterize order compact and order pre-compact Boolean algebras.

\begin{theorem}\label{bain}For a Boolean algebra $A$ the following conditions are equivalent:
\item[(i)] $A$ is atomic;
\item[(ii)] $A$ is order pre-compact;
\item[(iii)] $A$ is order locally pre-compact.
\end{theorem}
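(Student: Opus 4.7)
The plan is to prove the chain (i) $\Rightarrow$ (ii) $\Rightarrow$ (iii) $\Rightarrow$ (i); the implication (ii) $\Rightarrow$ (iii) has already been observed in the preliminaries.

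For (i) $\Rightarrow$ (ii), the idea is to exploit the fact that an atomic Boolean algebra $A$ has Dedekind--MacNeille completion $\hat{A}$ isomorphic to a power set $\Po\left(\Omega\right)$, which is order compact by Remark~\ref{pintwise}. Given any net $\left(p_{\alpha}\right)\subseteq A$, regard it as a net in $\hat{A}$ and extract an order convergent, hence order Cauchy, subnet $\left(p_{\alpha_{\gamma}}\right)$ using this compactness. The symmetric differences $p_{\alpha_{\gamma}}-p_{\alpha_{\gamma'}}$ stay in $A$ (as $A$ is a sub-Boolean algebra of $\hat{A}$), and they order converge to $0$ in $\hat{A}$; since order convergence on $A$ is the restriction of order convergence on $\hat{A}$, they order converge to $0$ in $A$ as well. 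Hence $\left(p_{\alpha_{\gamma}}\right)$ is order Cauchy in $A$, so $A$ is order pre-compact.

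For (iii) $\Rightarrow$ (i), I would argue by contrapositive. If $A$ is not atomic, pick $a>0$ with $[0,a]$ atomless; since local pre-compactness is inherited by order intervals, one may assume $A$ itself is atomless. A first step is that an atomless Boolean algebra is not order pre-compact: by Lemma~\ref{atomless}, the complements of proper order intervals have the finite intersection property and extend to an ultrafilter on the power set containing no proper order interval, which cannot be order Cauchy, since every order Cauchy filter contains a proper interval. Applied to each interval $[0,b]$ with $b>0$ (which is again atomless), this shows that no such $[0,b]$ is order pre-compact.

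The main construction is then a filter $\Fo$ on $\Po\left(A\right)$ that order converges to $0$ yet whose every element is not pre-compact. Let $F$ be an ultrafilter of the Boolean algebra $A$ (Lemma~\ref{ultr} with $R=\{0\}$), and let $\Fo$ be generated by $\{[0,b]:b\in F\}$. Then $\Fo$ is a proper filter, since $[0,b_1]\cap\cdots\cap[0,b_n]=[0,b_1\wedge\cdots\wedge b_n]$ is non-empty by closure of $F$ under finite meets. The main obstacle is verifying that $\bigcap_{b\in F}[0,b]=\{0\}$ so that $\Fo\to 0$: this follows from the fact that atomlessness forces every ultrafilter of $A$ to have only $0$ as a lower bound, since a positive lower bound $c$ would force $F=[c,1]$ and then the ultrafilter condition $a\in F$ or $\bar{a}\in F$ would in turn force $c$ to be an atom. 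Finally, every $S\in\Fo$ contains some $[0,b]$ with $b>0$, which is not pre-compact by the preceding step; hence $S$ too is not pre-compact (as supersets of non-pre-compact sets inherit that failure through nets), contradicting local pre-compactness of $A$.
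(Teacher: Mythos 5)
Your proposal is correct and follows essentially the same route as the paper: (i)$\Rightarrow$(ii) via the identification of the completion of an atomic Boolean algebra with a power set, and (iii)$\Rightarrow$(i) via the reduction to an atomless interval, Lemma~\ref{atomless} combined with Lemma~\ref{ultr} to produce a non-Cauchy ultrafilter, and the observation that an ultrafilter of an atomless Boolean algebra has infimum~$0$. The only (harmless) variation is that you test local pre-compactness against the filter generated by the intervals $\left[0,b\right]$, $b\in F$, rather than against the tail filter of the decreasing net $U$ as the paper does, which lets you land directly on a full atomless interval $\left[0,b\right]$ instead of extracting a sub-interval $\left[w,u\right]$ from a tail.
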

\begin{proof}
(i)$\Rightarrow$(ii): If $A$ is atomic, its  completion is isomorphic to $\Po\left(\Omega\right)$, for some set~$\Omega$. According to Remark~\ref{pintwise}, on $\Po\left(\Omega\right)\simeq\left\{0,1\right\}^{\Omega}$ order convergence coincides with the convergence in the product topology in $\left\{0,1\right\}^{\Omega}$, which is compact. Since order convergence on a lattice is a restriction of order convergence on its completion, it follows that order convergence on $A$ is the restriction of a compact topology, and so is pre-compact.\medskip

(ii)$\Rightarrow$(iii) is obvious.\medskip

(ii)$\Rightarrow$(i): Assume that $A$ is order pre-compact. If it is not atomic then there is $a>0$ such that $\left[0,a\right]$ is an atomless Boolean algebra, which is also order pre-compact. Hence, in order to reach a contradiction we may assume without loss of generality that $A$ is atomless. Let $\Io$ be the collection of all subsets of $A$ that can be covered by a finite union of proper order intervals in~$A$. Clearly, $\Io$ is an ideal in $\Po\left(A\right)$. Moreover, according to Lemma \ref{atomless}, $A\notin\Io$. Hence, $\Io$ is a proper ideal, and so according to Lemma~\ref{ultr} there is an ultrafilter $\mathcal{U}\subseteq \Po\left(A\right)\backslash \Io$. Observe that $\mathcal{U}$ contains no proper order intervals of $A$ and so is not order Cauchy. This contradicts order pre-compactness.\medskip

(iii)$\Rightarrow$(i): Assume that $A$ is order locally pre-compact, but not atomic. Let $a>0$ be such that $\left[0,a\right]$ is an atomless Boolean algebra. Applying Lemma~\ref{ultr} to the ideal $\left\{0\right\}$ of $\left[0,a\right]$ yields existence of an ultrafilter $U\subseteq \left(0,a\right]$ of the Boolean algebra $\left[0,a\right]$.

We claim that $\bigwedge U=0$. Indeed, if $U\ge b>0$ then there is $c\in\left(0,b\right)$, hence $\left[c,a\right]$ is a filter in $\left[0,a\right]$ with $U\varsubsetneq \left[c,a\right] \varsubsetneq \left[0,a\right]$, which contradicts maximality of~$U$. Hence, $U$ can be viewed as a net which decreases (hence order converges) to~$0$. Since $A$ is order locally pre-compact, there is $u\in U$ such that $\left[0,u\right]\cap U$ is order pre-compact. As $\bigwedge U=0< u$, there is $v\in U$ such that $v\not\ge u$, hence $w:=u\wedge w\in U\cap \left(0,u\right)$. Then $\left[w,a\right]\subseteq U$, and in particular $\left[w,u\right]\subseteq \left[0,u\right]\cap U$. It follows that $\left[w,u\right]$ is simultaneously order pre-compact, and an atomless Boolean algebra with respect to the induced order. Contradiction.
\end{proof}

\begin{corollary}\label{bainc}For a Boolean algebra $A$ the following conditions are equivalent:
\item[(i)] $A$ is complete and atomic;
\item[(ii)] $A$ is order compact;
\item[(iii)] $A$ is order locally compact.
\end{corollary}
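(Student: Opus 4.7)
The plan is to establish the three-way equivalence by going (i)$\Rightarrow$(ii)$\Rightarrow$(iii)$\Rightarrow$(i), using Theorem \ref{bain} as the workhorse and quoting the general facts already in the Preliminaries.

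For (i)$\Rightarrow$(ii), I would appeal to the isomorphism $A\simeq\Po(\Omega)$ for some set $\Omega$, valid whenever $A$ is complete and atomic (cited in Section 3 via \cite[35.2]{sikorski}). By Remark \ref{pintwise}, order convergence on $\Po(\Omega)\simeq\{0,1\}^{\Omega}$ coincides with the product topology, under which the space is compact. Consequently every ultrafilter on $A$ order converges, which is the definition of order compactness.

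The implication (ii)$\Rightarrow$(iii) is immediate from the definitions, since every order convergent filter contains $A$ itself as an order compact set.

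For (iii)$\Rightarrow$(i), I would extract two conclusions from order local compactness. First, order local compactness trivially implies order local pre-compactness, so Theorem \ref{bain} forces $A$ to be atomic. Second, the paragraph at the end of Section 2 observes that a Boolean algebra is a convergence group under order convergence, and that an order locally compact convergence group is complete in the convergence-group sense (by \cite[Proposition~3.1.14]{bb}); this in turn implies that $A$ is a complete lattice, as noted there. Combining these two facts gives (i). The only subtle point is that we are invoking general convergence-group completeness to deduce lattice completeness, but this step is already asserted verbatim in the preliminaries, so no real obstacle remains — the corollary is a clean assembly of Theorem \ref{bain}, Proposition \ref{completel} (or the convergence-group remark), and Remark \ref{pintwise}.
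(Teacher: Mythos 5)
Your proposal is correct and follows essentially the same route as the paper: (i)$\Rightarrow$(ii) via the isomorphism with $\Po(\Omega)$ and Remark~\ref{pintwise}, (ii)$\Rightarrow$(iii) trivially, and (iii)$\Rightarrow$(i) by combining Theorem~\ref{bain} for atomicity with the convergence-group completeness remark from the end of Section~2 for lattice completeness. The only cosmetic quibble is that Proposition~\ref{completel} is not actually needed for the chain you chose, since completeness comes from the convergence-group argument rather than from order compactness directly.
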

\begin{proof}
(i)$\Rightarrow$(ii) follows from Remark~\ref{pintwise} and the fact that every complete and atomic Boolean algebra is isomorphic to $\Po\left(\Omega\right)$ for some set~$\Omega$. (ii)$\Rightarrow$(iii) is obvious.

(iii)$\Rightarrow$(i): If $A$ is order locally compact, then it is order locally pre-compact, and so atomic, according to Theorem~\ref{bain}. On top of that, as was mentioned above, order local compactness implies completeness.
\end{proof}

As was mentioned in the introduction, there are numerous other characterizations of complete atomicity (see \cite[Chapter~VII, Proposition~1.16]{johnstone}). It was proven in \cite{erne} that order convergence in a Boolean algebra is topological iff the Boolean algebra is atomic. We provide a direct alternative proof of this result.

\begin{proposition}[\cite{erne}]\label{boot}
Order convergence in a Boolean algebra is topological iff the Boolean algebra is atomic.
\end{proposition}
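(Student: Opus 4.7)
For the easy direction (atomic $\Rightarrow$ topological), I would use that the Dedekind--MacNeille completion of an atomic Boolean algebra is isomorphic to $\Po(\Omega)$ for some set $\Omega$. By Remark~\ref{pintwise}, order convergence on $\Po(\Omega)$ is the product topology, and since order convergence on $A$ is the restriction of that on its completion, it is induced by the subspace topology and hence is topological.

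For the hard direction, I would argue the contrapositive. If $A$ is not atomic, pick $a > 0$ with $[0,a]$ atomless; since order convergence on an order interval agrees with the restriction of the order convergence on $A$, the subspace topology would still witness topologicity on $[0,a]$. So I may assume $A$ itself is atomless and that some topology $\tau$ induces its order convergence, and aim for a contradiction. Lemma~\ref{ultr} applied to the proper ideal $\{0\}$ produces a Boolean ultrafilter $\mathcal{U} \subseteq A \setminus \{0\}$, and the argument from the proof of Theorem~\ref{bain}(iii)$\Rightarrow$(i) --- if $\bigwedge \mathcal{U} = b > 0$, atomlessness supplies $c \in (0, b)$ and then $[c, 1]$ is a filter strictly larger than $\mathcal{U}$ --- shows $\bigwedge \mathcal{U} = 0$. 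Viewed as a decreasing net, $\mathcal{U}$ then order converges to $0$, and hence $\tau$-converges to $0$.

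The key first step is to show that every $\tau$-neighborhood $V$ of $0$ contains an interval $[0, d]$ with $d > 0$: otherwise one can choose $b_u \in [0, u] \setminus V$ for each $u \in \mathcal{U}$, and then $0 \le b_u \le u \to 0$ forces $b_u \to 0$ in order and thus in $\tau$, contradicting $b_u \notin V$. It follows that $\Cco := \{d > 0 : [0,d]\text{ is a neighborhood of } 0\}$ is a proper filter in $A$ (meets of its elements stay positive, or else $\{0\}$ would be open and $\mathcal{U}$ could not $\tau$-converge to $0$), that $\{[0,d] : d \in \Cco\}$ is a neighborhood base at $0$, and since the neighborhood filter of $0$ order converges to $0$ one obtains $\bigwedge \Cco = 0$; in particular $\Cco$ must contain some $d_0 \ne 1$.

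The final move applies the same recipe inside the atomless interval $[0, \overline{d_0}]$: Lemma~\ref{ultr} produces a Boolean ultrafilter $\mathcal{U}' \subseteq [0, \overline{d_0}] \setminus \{0\}$ with $\bigwedge \mathcal{U}' = 0$, so $\mathcal{U}'$ order converges (hence $\tau$-converges) to $0$ in $A$. But then some $u \in \mathcal{U}'$ must lie in the $\tau$-neighborhood $[0, d_0]$, forcing $u \le d_0 \wedge \overline{d_0} = 0$, a contradiction. The main obstacle is this forward direction; the subtlety is recognising that any topology inducing order convergence must have a neighborhood base at $0$ of downward intervals from a filter with infimum $0$, so that the ``orthogonal'' atomless interval $[0, \overline{d_0}]$ can be used to manufacture an order-null net that cannot possibly be eventually $\le d_0$.
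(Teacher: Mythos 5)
Your argument is correct, but the forward direction (topological $\Rightarrow$ atomic) takes a genuinely different route from the paper's. After the same reduction to an atomless interval, the paper represents $\left[0,a\right]$ via Stone duality as the clopen algebra of a compact space $K$ with no isolated points, builds for each $x\in K$ a filter $\Go_{x}$ order converging to the top element, and observes that $\bigcap_{x\in K}\Go_{x}$ contains no proper order interval --- contradicting the fact that in a topological space an intersection of filters converging to a point still converges to that point. You instead localize at $0$ and avoid any representation theorem: since the neighborhood filter of $0$ must itself order converge to $0$, it contains intervals $\left[0,q_{i}\right]$ with $\bigwedge q_{i}=0$, hence some interval neighborhood $\left[0,d_{0}\right]$ with $d_{0}\ne 1$; then the ultrafilter device from Theorem~\ref{bain} (via Lemma~\ref{ultr}) applied inside the complementary atomless interval $\left[0,\overline{d_{0}}\right]$ manufactures a decreasing order-null net that can never enter $\left[0,d_{0}\right]$. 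Your version is more elementary and self-contained, exploiting only complementation; the paper's version buys methodological uniformity, since the same Stone-space construction of the filters $\Go_{x}$ is recycled almost verbatim for the vector lattice analogue in Theorem~\ref{topuo}, where no complementation is available. One small caveat: your parenthetical assertion that $\left\{\left[0,d\right]\mid d\in\Cco\right\}$ is a neighborhood base at $0$ does not follow from what precedes it (a neighborhood of $0$ contains some $\left[0,d\right]$ with $d>0$, but that interval need not itself be a neighborhood); fortunately this claim is never used, as the existence of $d_{0}\in\Cco$ with $d_{0}\ne 1$ already follows from the order convergence of the neighborhood filter together with the observation that $\left\{0\right\}$ cannot be open.
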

\begin{proof}
Necessity is proven in the same way as (i)$\Rightarrow$(ii) in Theorem \ref{bain}. In order to prove sufficiency we assume that $A$ is a Boolean algebra which is not atomic, but such that order convergence on $A$ is topological. Let $a>0$ such that $\left[0,a\right]$ is atomless. Then order convergence on $\left[0,a\right]$ is topological.

Represent $\left[0,a\right]$ as the Boolean algebra of clopen subsets of a compact totally disconnected Hausdorff space~$K$. Since isolated points of $K$ correspond to atoms of $\left[0,a\right]$ we conclude that $K$ has no isolated points. For every $x\in K$ consider the collection $B_{x}\subset A$ of clopen subsets of $K$ which do not contain~$x$. Since $x$ is not isolated, it follows that $\bigvee B_{x}=K$, where the supremum is taken in $Clop\left(K\right)$. Therefore, $\bigcap\limits_{C\in B_{x}}\left[C,K\right]=\left\{K\right\}$, and so the filter $\Go_{x}$ on $\left[0,a\right]$ generated by $\left\{\left[C,\1\right],~C\in B_{x}\right\}$ order converges to~$K$. To prove that order convergence is not topological it is enough to show that $\Go:=\bigcap\limits_{x\in K}\Go_{x}$ does not order converge to~$K$. That is true because $\Go$ does not contain any proper order intervals in $\left[0,a\right]$. Indeed, if $\left[C,D\right]$ is a member of $\Go$ then for every $x\in K$ we have that $\left[C,D\right]\in\Go_{x}$. Hence, there is $C_{x}\in B_{x}$ such that $\left[C_{x},K\right]\subseteq \left[C,D\right]$, from where $D=K$, and $C\subseteq C_{x}\subset K\backslash\left\{x\right\}$. Since $x$ was arbitrary we conclude that $C=\varnothing$.
\end{proof}

Combining Theorem~\ref{bain} with Proposition~\ref{boot}, we arrive at the conclusion that a Boolean algebra is order pre-compact iff order convergence on it is topological.

\section{The case of a vector lattice}

We now turn to vector lattices, where we follow a similar scheme as in the preceding section. Throughout this section, $F$ is an Archimedean vector lattice. We start with an analogue of Lemma~\ref{atomless}. If $K$ is a topological space, let $\Co\left(K\right)$ be the vector lattice of all continuous functions on $K$ with pointwise operations. We also denote the constant functions on $K$ with values $0$ and $1$ by $\0$ and~$\1$, respectively. A \emph{linear sublattice} of a vector lattice is a linear subspace which is also closed with respect to lattice operations. Note that in vector lattice literature, the term ``sublattice'' usually means linear sublattice. If $F$ is a vector lattice, then we call $e,f\in F$ \emph{disjoint} and write $e\bot f$ if $\left|e\right|\wedge \left|f\right|=0$.

\begin{lemma}\label{atomles}
Let $K$ be a compact Hausdorff space with no isolated points. If $F\subseteq\Co\left(K\right)$ is a norm dense linear sublattice then $F\cap \left[\0,\1\right]$ cannot be covered by a finite collection of proper order intervals.
\end{lemma}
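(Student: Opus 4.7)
The plan is to mimic Lemma~\ref{atomless}, replacing the finite Boolean partition of the atomless algebra with a single separating continuous function produced by Urysohn's lemma and pulled into $F$ by norm density; the absence of isolated points plays the role of atomlessness. Suppose for contradiction that $F\cap[\0,\1]\subseteq[f_1,g_1]\cup\cdots\cup[f_n,g_n]$ with each $[f_k,g_k]$ proper. For each $k$, properness supplies $h_k^\ast\in F\cap[\0,\1]\setminus[f_k,g_k]$, so either $h_k^\ast\not\ge f_k$---witnessed by a point $y_k\in K$ with $f_k(y_k)>h_k^\ast(y_k)\ge 0$---or $h_k^\ast\not\le g_k$---witnessed by a point $z_k\in K$ with $g_k(z_k)<h_k^\ast(z_k)\le 1$. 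Selecting one such case for each $k$ yields a partition $\{1,\ldots,n\}=A\sqcup B$ with $f_k(y_k)>0$ for $k\in A$ and $g_k(z_k)<1$ for $k\in B$.

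Since $K$ has no isolated points, every nonempty open subset of $K$ is infinite, so the $z_k$ may be adjusted inside the open set $\{g_k<1\}$ to avoid the finitely many $y_j$, ensuring that the finite sets $\{y_k:k\in A\}$ and $\{z_k:k\in B\}$ are disjoint. Urysohn's lemma then produces $\psi\in\Co(K)$ with $\0\le\psi\le\1$, $\psi(y_k)=0$ for $k\in A$, and $\psi(z_k)=1$ for $k\in B$. Fix $\varepsilon>0$ with $3\varepsilon<f_k(y_k)$ for all $k\in A$ and $3\varepsilon<1-g_k(z_k)$ for all $k\in B$, and set $\phi=(1-2\varepsilon)\psi+\varepsilon\1\in\Co(K)$, so that $\phi(y_k)=\varepsilon$, $\phi(z_k)=1-\varepsilon$, and $\varepsilon\le\phi\le 1-\varepsilon$ pointwise. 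By norm density of $F$ in $\Co(K)$, choose $h\in F$ with $\|h-\phi\|_\infty<\varepsilon/2$; then $\0<h<\1$ pointwise, so $h\in F\cap[\0,\1]$, while $h(y_k)<3\varepsilon/2<f_k(y_k)$ and $h(z_k)>1-3\varepsilon/2>g_k(z_k)$, showing $h\notin[f_k,g_k]$ for any $k$. This contradicts the cover.

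The main subtlety is that the naive return to $F\cap[\0,\1]$ by truncation $(h\vee\0)\wedge\1$ requires $\1\in F$, which is not assumed. Keeping $\phi$ strictly inside the band $(\0,\1)$ and approximating it closely enough means $h$ itself already lies in $F\cap[\0,\1]$, bypassing any truncation. The other technical step---producing disjoint ``low'' and ``high'' witnesses so that Urysohn's lemma applies---is exactly where the no-isolated-points hypothesis enters, as it guarantees infinitely many candidate points in each relevant open set.
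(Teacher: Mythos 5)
Your proof is correct, and it takes a genuinely different and more elementary route than the paper's. The paper mimics the Boolean Lemma~\ref{atomless}: it forms the minimal nonzero infima of the lower endpoints $e_k$ and of the reflected upper endpoints $\1-f_k$, picks points in their supports (separated using the absence of isolated points), and invokes the sublattice Urysohn lemma of \cite{erz} to manufacture two disjoint elements of $P$ whose sum violates every proper interval; this keeps the whole construction inside the lattice $F$ and produces exact prescribed values. You instead extract, for each proper interval, a single point witnessing either $f_k(y_k)>0$ or $g_k(z_k)<1$, separate the two finite point sets (again via no isolated points), and build one function by classical Urysohn plus an $\varepsilon$-perturbation, using norm density to land in $F$ while the slack $\varepsilon\le\phi\le 1-\varepsilon$ guarantees the approximant already lies in $\left[\0,\1\right]$ without any truncation. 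The estimates ($h(y_k)<3\varepsilon/2<f_k(y_k)$, $h(z_k)>1-3\varepsilon/2>g_k(z_k)$) all check out, and the only use of the no-isolated-points hypothesis --- moving each $z_k$ within the open set $\left\{g_k<1\right\}$ off the finite set of $y_j$'s --- is exactly where it is needed (the lemma is false for a singleton $K$). What your argument buys is simplicity and a touch of extra generality: it never uses that $F$ is a sublattice, only that it is a norm dense linear subspace, whereas the paper's proof is designed to run parallel to the Boolean case and to rely on the external sublattice Urysohn lemma.
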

\begin{proof}
  Denote $P:=F\cap \left[\0,\1\right]$. Assume that $e_{1},...,e_{n},f_{1},...,f_{n}$ are such that $P=\left[e_{1},f_{1}\right]\cup...\cup\left[e_{n},f_{n}\right]$. Let also $g_{k}:=\1-f_{k}$, for every $k=1,...,n$, and $A:=\left\{e_{1},...,e_{n}\right\}$, and $B:=\left\{g_{1},...,g_{n}\right\}$. Let $A^{\wedge}$ be the collection of all infima of elements of $A$. Let $e'_{1},...,e'_{m}$ be the minimal elements of $A^{\wedge}\backslash\left\{\0\right\}$. Clearly, these functions are disjoint, and for any $e_{i}$ and $e'_{j}$ we have that either $e_{i}\bot e'_{j}$ or $e'_{j}\le e_{i}$. Moreover, since $A^{\wedge}\backslash\left\{\0\right\}$ is finite, for every $e_{i}\ne\0$ there is at least one $j$ such that $e'_{j}\le e_{i}$. Define $g'_{1},...,g'_{l}$ to be the minimal elements of $B^{\wedge}\backslash\left\{\0\right\}$; they have similar properties.

Pick $x_{i}\in\supp e'_{i}$, for every $i=1,...,m$. Since there are no isolated points, for every $j=1,...,l$ there is $y_{j}\in\supp g'_{j}\backslash \left\{x_{1},...,x_{m}\right\}$. Surround $\left\{x_{1},...,x_{m}\right\}$ with an open $U$ such that $y_{j}\notin\overline{U}$, for every~$j$. Then surround $\left\{y_{1},...,y_{l}\right\}$ with an open $V$ such that $\overline{U}\cap\overline{V}=\varnothing$.

Let $u:=\frac{1}{2}\left(e'_{1}+...+e'_{m}\right)$ and $v:=\1 - \frac{1}{2}\left(g'_{1}+...+g'_{l}\right)$. From sublattice Uryshohn lemma (see \cite[Proposition~2.1]{erz}) applied to $\left\{x_{1},...,x_{m}\right\}\subseteq U$ and $u$ there is $e\in P$ which vanishes outside of $U$ and such that  $e\left(x_{i}\right)=u\left(x_{i}\right)=\frac{1}{2}e'_{i}\left(x_{i}\right)$, for every~$i$. Similarly, applying the lemma to $\left\{y_{1},...,y_{l}\right\}\subseteq V$ and $v$ yields
$g\in P$ which vanishes outside of $V$ and such that $g\left(y_{j}\right)=v\left(y_{j}\right)=1-\frac{1}{2}g'_{j}\left(y_{j}\right)$, for every~$j$. Note that $e\bot g$.

Let $h:=e+g$. WLOG we may assume that $h\in \left[e_{1},f_{1}\right]$. Suppose $e_{1}\ne\0$; after relabeling $e'_{1},...,e'_{m}$, WLOG $e'_{1}\le e_{1}$, and so $e_{1}\left(x_{1}\right)\le h\left(x_{1}\right)=e\left(x_{1}\right)=\frac{1}{2}e'_{1}\left(x_{1}\right)<e'_{1}\left(x_{1}\right)\le e_{1}\left(x_{1}\right)$, contradiction. Analogously, if $f_{1}\ne\1$ then WLOG $g'_{1}\le g_{1}=\1-f_{1}$, and so $f_{1}\left(y_{1}\right)\ge h\left(y_{1}\right)= g\left(y_{1}\right)=1-\frac{1}{2}g'_{1}\left(y_{1}\right)>1- g'_{1}\left(y_{1}\right)\ge f_{1}\left(y_{1}\right)$, contradiction. Thus, $\left[e_{1},f_{1}\right]=P$.
\end{proof}

A (linear) sublattice $E\subseteq F$ is called \emph{order dense} if for every $f>0$ there is $e\in E$ with $0<e\le f$. A linear subspace $E\subseteq F$ is called an \emph{ideal} if $f\in F$ and $e\in E$ with $\left|f\right|\le \left|e\right|$ imply $f\in E$. Every ideal is a sublattice. If an order dense sublattice of a vector lattice is Dedekind complete with respect to the induced order structure, then it is an ideal (see \cite[Theorem~1.40]{ab0}). Both ideals and order dense sublattices have a property that order convergence on their order intervals is the restriction of order convergence of $F$ (for ideals this is easy to see, for order dense sublattices combine \cite[Theorem~1.23]{ab} with \cite[Corollary~2.12]{gtx}). The intersection of any collection of ideals is an ideal; the smallest ideal which contains $f\in F_{+}$ is denoted by~$I_{f}$. There is a vector lattice isomorphism $T$ from $I_{f}$ onto a norm dense sublattice of $\Co\left(K\right)$, where $K$ is a compact Hausdorff space, such that $Tf=\1$ (this follows from Krein-Kakutani theorem, see e.g., \cite[Theorem~4.21]{ab}). For any $G\subseteq F$ its \emph{disjoint complement} $G^{d}:=\left\{f\in F,~\forall g\in G:~ f\bot g\right\}$ is an ideal.\medskip

Recall that $f>0$ is called an \emph{atom} if $\left[0,f\right]$ consists of scalar multiples of~$f$. If $E\subset F$ is an ideal, then $e\in E$ is an atom in $E$ iff it is an atom in~$F$. We call $F$ \emph{atomic} if $\left[0,g\right]$ contains an atom, for every $g>0$. In this case, $F$ is isomorphic to an order dense sublattice of $\R^{\Omega}$ for some set $\Omega$ (see \cite[Theorem~1.78]{ab0}); if $F$ is Dedekind complete then this sublattice is, in fact, an ideal of~$\R^{\Omega}$. Note that order density in $\R^{\Omega}$ amounts to containing characteristic functions of all singletons in~$\Omega$. It is easy to see (cf. \cite[p.~143]{schaefer}) that the set $F_{a}$ of all $f\in F$ such that $\left|f\right|$ is a supremum of a collection of atoms is an ideal in~$F$. Moreover, $F_{a}$ is an atomic vector lattice (we call it the \emph{atomic part} of $F$). Its disjoint complement $F_{a}^{d}$ is an ideal in $F$ which contains no atoms (we call it the \emph{atomless part} of $F$), and $F_{a}=F_{a}^{dd}$. If $f\in F_{a}$ then $I_{f}\subseteq F_{a}$, and so $I_{f}$ is an atomic vector lattice. Similarly, if $f\in F_{a}^{d}$ then $I_{f}$ is an atomless vector lattice. If $0\le f\notin F_{a}$ then $f\not\perp F_{a}^{d}$, and so there is $g\in F_{a}^{d}\cap\left(0,f\right]$.\medskip

Let $S$ be a Tychonoff space. The space $\Co_{b}\left(S\right)$ of all bounded continuous functions on $S$ is an ideal in $\Co\left(S\right)$.

\begin{lemma}\label{cba}
Let $S$ be a Tychonoff space, and let $F$ be a norm dense sublattice of $\Co_{b}\left(S\right)$. Then, the atoms in $F$ are positive scalar multiples of the characteristic functions of isolated points in~$S$. Moreover, $F$ is atomic iff $S$ has a dense set of isolated points.
\end{lemma}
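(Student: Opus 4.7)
My plan is to first identify the atoms of $\Co_b(S)$, then use norm density of $F$ to show the atoms of $F$ are exactly those atoms of $\Co_b(S)$ which lie in $F$, and finally transfer to the Stone--\v{C}ech compactification to invoke the sublattice Urysohn lemma (as in Lemma~\ref{atomles}, \cite[Proposition~2.1]{erz}) for the atomicity characterization. For atoms of $\Co_b(S)$: suppose $f$ is an atom. If the open set $\{f>0\}$ contained two distinct points $x_1,x_2$, complete regularity of $S$ would provide $\phi\in\Co_b(S)$ with $0\le\phi\le 1$, $\phi(x_1)=1$, $\phi(x_2)=0$, giving $\phi f\in[\0,f]$ that is not a scalar multiple of $f$. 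So $\{f>0\}=\{x_0\}$; continuity and positivity at $x_0$ force $\{x_0\}$ to be open, hence $x_0$ isolated, and $f=f(x_0)\mathbf{1}_{\{x_0\}}$. The converse is routine.

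Next, for $f$ an atom of $F$ and $g\in\Co_b(S)$ with $0\le g\le f$, I would approximate $g$ in norm by $g_n\in F$ and set $h_n:=g_n^+\wedge f\in F\cap[\0,f]$; Lipschitz continuity of the lattice operations yields $h_n\to g$ uniformly. The atom property in $F$ forces $h_n=\lambda_n f$ with $\lambda_n\in[0,1]$; evaluating at any $x$ with $f(x)>0$ gives $\lambda_n\to g(x)/f(x)=:\lambda$, whence $g=\lambda f$. Thus $f$ is an atom of $\Co_b(S)$, and the previous paragraph identifies its form, proving the first assertion.

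For the moreover, set $K:=\beta S$, so $\Co_b(S)=\Co(K)$ and $F\subseteq\Co(K)$ is a norm dense sublattice; density of $S$ in $K$ and Hausdorffness of $K$ identify the isolated points of $K$ with the isolated points $D$ of $S$. If $D$ is dense in $S$, then for $g\in F$ with $g>0$ the open set $\{g>0\}$ meets $D$ at some $d$ with $g(d)>0$; since $\{d\}$ is open in $K$, applying the sublattice Urysohn lemma to $\{d\}\subseteq\{d\}$ with the rescaled datum $g/\|g\|_\infty\in F\cap[\0,\1]$ produces, after unscaling, $e=g(d)\mathbf{1}_{\{d\}}\in F$, which by the first assertion is an atom of $F$ below $g$. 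Conversely, if $D$ is not dense in $S$, take $x_0\in S\setminus\overline{D}^{S}$, which also lies in the open set $W:=K\setminus\overline{D}^{K}$ (using $\overline{D}^{K}\cap S=\overline{D}^{S}$); pick $u\in F$ with $u(x_0)>0$ by norm-approximating $\1\in\Co(K)$, and apply sublattice Urysohn to $\{x_0\}\subseteq W$ with $u^+/\|u^+\|_\infty$ to obtain $e\in F$ with $e(x_0)>0$ and $e$ vanishing outside $W$, hence $e|_D=0$. Any hypothetical atom of $F$ below $e$ would be of the form $c\mathbf{1}_{\{d\}}$ with $d\in D$, forcing $c\le e(d)=0$, a contradiction.

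The main obstacle is the compactification bookkeeping: verifying that isolated points of $S$ remain isolated in $\beta S$ (so that $\{d\}$ is open in $K$, letting the Urysohn construction deliver the characteristic function of the singleton itself) and that $\overline{D}^{K}\cap S=\overline{D}^{S}$ (so that the element $e$ produced in the necessity direction vanishes on \emph{all} of $D$, not only at points near $x_0$). Both are standard consequences of density of $S$ in $\beta S$ combined with Hausdorffness, but they are precisely what makes the Urysohn-based construction reach every isolated point simultaneously and thereby close the characterization.
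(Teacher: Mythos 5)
Your proof is correct, but the atom-characterization part takes a genuinely different route from the paper. The paper works entirely inside $F$: it uses the sublattice Urysohn lemma both to show $\1_{\left\{s\right\}}\in F$ for isolated $s$ and, for an $f$ that is positive at two distinct points $s,t$, to manufacture $e\in F\cap\left[\0,f\right]$ with $e\left(s\right)=f\left(s\right)$ and $e\left(t\right)=0$, directly disqualifying $f$ as an atom. You instead first characterize the atoms of the ambient lattice $\Co_{b}\left(S\right)$ (where the pointwise product $\phi f$ supplied by complete regularity is available), and then prove the transfer principle that an atom of a norm-dense sublattice is an atom of the whole space, via the approximation $h_{n}:=g_{n}^{+}\wedge f$ and the Lipschitz property of lattice operations. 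That transfer argument is a nice, reusable observation that the paper does not isolate, and it spares you one application of the Urysohn lemma; the paper's version is more self-contained in that it never leaves $F$ and does not pass through $\beta S$ (it applies the Urysohn lemma directly in the Tychonoff setting, so your compactification bookkeeping, while correct, is avoidable). The ``moreover'' part of your argument is essentially the paper's: a dense set of isolated points yields an atom $g\left(d\right)\1_{\left\{d\right\}}$ below any $g>0$, and a nonempty open set free of isolated points supports, via Urysohn, a positive element of $F$ with no atoms below it. One small point worth making explicit: your necessity direction invokes ``the first assertion'' to conclude that $g\left(d\right)\1_{\left\{d\right\}}$ \emph{is} an atom of $F$, whereas what you proved in detail is only that every atom has that form; the converse (any $h\in F$ with $0\le h\le c\1_{\left\{d\right\}}$ vanishes off $d$ and hence is a multiple of it) is indeed routine, but it is the half of the characterization that the ``moreover'' actually uses, so it deserves a sentence.
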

\begin{proof}
Let $s\in S$ be an isolated point and let $f:=\1_{\left\{s\right\}}$. According to sublattice Uryshohn lemma (see \cite[Proposition~2.1]{erz}) applied to $f$, $F$, and the compact and open set $\left\{s\right\}$, it follows that $f\in F$. It is easy to see that $rf$ is an atom in $F$ for any $r>0$.

Conversely, assume that $f\in F_{+}$ is not a positive scalar multiple of a characteristic function of an isolated point in~$S$. Then,  $f\left(s\right),f\left(t\right)>0$, for some distinct $s,t\in S$. Let $U$ be a neighborhood of $s$ which does not contain~$t$. Applying the sublattice Uryshohn lemma to $f$, $F$, $\left\{s\right\}$, and $U$ yields $e\in F \cap \left[\0,f\right]$ which vanishes outside of $U$ such that $e\left(s\right)=f\left(s\right)>0$. Clearly, $e$ cannot be a scalar multiple of~$f$, and so $f$ is not an atom in~$F$.

It is easy to see that if $S$ has a dense set of isolated points, then there is an atom in $ F \cap \left[\0,f\right]$, for every $f>0$. Assume that isolated points do not form a dense set in~$S$. Then, there is an open $U\subset S$ which contains no isolated points of~$S$. Using Urysohn Lemma again, one can construct $\0<f\in F_{+}$ which vanishes outside~$U$. It follows that $F\cap \left[\0,f\right]$ contains no atoms.
\end{proof}

We can now prove a ``local version'' of Theorem~\ref{bain} for vector lattices.

\begin{theorem}\label{topuo}
For $f\in F_{+}$ the following conditions are equivalent:
\item[(i)] $f\in F_{a}$;
\item[(ii)] $\left[0,f\right]$ is order pre-compact;
\item[(iii)] Order convergence on $\left[0,f\right]$ is topological.
\end{theorem}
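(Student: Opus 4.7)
The plan is to prove the cycle $(\mathrm{i})\Rightarrow(\mathrm{ii})\wedge(\mathrm{iii})$ via a concrete representation of $I_f$, and to prove $(\mathrm{ii})\Rightarrow(\mathrm{i})$ and $(\mathrm{iii})\Rightarrow(\mathrm{i})$ by contrapositive, reducing to an atomless ideal $I_g$ and invoking either Lemma~\ref{atomles} or a filter construction in the spirit of Proposition~\ref{boot}.

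For $(\mathrm{i})\Rightarrow(\mathrm{ii})\wedge(\mathrm{iii})$: if $f\in F_a$ then $I_f\subseteq F_a$ is atomic and so embeds as an order dense sublattice of some $\R^\Omega$, sending $f$ to some $\tilde f$. Since $I_f$ is an ideal of $F$ and order intervals of both ideals and order dense sublattices inherit order convergence from the ambient lattice, the order convergence structure on $\left[0,f\right]$ in $F$ coincides with the restriction to the image of $\left[0,f\right]$ of the product topology on $\left[\0,\tilde f\right]\subseteq\R^\Omega$ (Proposition~\ref{pointwise}). The restriction of a topology to a subspace is again a topology, which gives (iii). For (ii), any net in $\left[0,f\right]\subseteq\left[\0,\tilde f\right]$ admits a pointwise convergent subnet by compactness of $\left[\0,\tilde f\right]$; the differences of this subnet converge pointwise (hence order) to $\0$ in $\R^\Omega$ and so, by the same restriction property, order converge to $0$ in $F$, making the subnet order Cauchy.

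For both converses, assume $f\notin F_a$ and pick $g\in F_a^d\cap\left(0,f\right]$, so $I_g\subseteq F_a^d$ is atomless. By Krein-Kakutani identify $I_g$ with a norm dense sublattice $G$ of some $\Co\left(K\right)$, $K$ compact Hausdorff, in such a way that $g\mapsto\1$; Lemma~\ref{cba} then forces $K$ to have no isolated points. Set $P:=G\cap\left[\0,\1\right]$, which is order-isomorphic to $\left[0,g\right]$, so the assumed pre-compactness or topologicality of $\left[0,f\right]$ passes to $P$. For $\neg(\mathrm{ii})$: Lemma~\ref{atomles} says $P$ is not covered by finitely many proper order intervals, so the family of coverable subsets forms a proper ideal in $\Po\left(P\right)$; Lemma~\ref{ultr} produces an ultrafilter on $P$ disjoint from it, containing no proper interval, which by the preliminaries on Cauchy filters cannot be order Cauchy---contradicting pre-compactness.

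For $\neg(\mathrm{iii})$ we mimic Proposition~\ref{boot}. For each $x\in K$ set $B_x:=\left\{h\in P\mid h\left(x\right)=0\right\}$, and let $\Go_x$ be the filter on $P$ generated by $\left\{\left[h,\1\right]\cap P\mid h\in B_x\right\}$. For any $y\neq x$, disjoint open neighborhoods of $x$ and $y$ combined with sublattice Urysohn applied to $\1\in G$ produce $h\in B_x$ with $h\left(y\right)=1$; the continuity of elements of $G\subseteq\Co\left(K\right)$ together with the absence of isolated points then force any $k\in P$ with $k\ge B_x$ to equal $\1$, so $\Go_x$ order converges to $\1$. Topologicality would require $\Go:=\bigcap_{x\in K}\Go_x$ to order converge to $\1$ as well; but any interval $\left[h,k\right]\in\Go$ must contain some $\left[h_x,\1\right]\cap P$ with $h_x\in B_x$ for every $x$, forcing $k=\1$ and $h\left(x\right)\le h_x\left(x\right)=0$ for all $x$, so $\left[h,k\right]=\left[\0,\1\right]$ is the only interval in $\Go$, which cannot order converge to $\1$---contradiction. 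The main obstacle lies in verifying $\sup B_x=\1$ inside $P$: sublattice Urysohn only pins down the values of competitors $k\in P$ at points $y\neq x$, and the value at $x$ is controlled only because $k$ is continuous on a space in which $x$ is non-isolated.
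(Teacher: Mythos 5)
Your proposal is correct and follows essentially the same route as the paper: the same reduction to $I_f\subseteq\R^{\Omega}$ for (i)$\Rightarrow$(ii),(iii), and for the converses the same passage to $g\in F_a^d\cap\left(0,f\right]$, the Krein--Kakutani representation $P=\left[\0,\1\right]\cap TI_g$ with $K$ having no isolated points, Lemmas~\ref{atomles} and~\ref{ultr} for pre-compactness, and the filters $\Go_x$ generated by functions vanishing at $x$ for topologicity. The only difference is that you spell out two details the paper leaves implicit (the extraction of an order Cauchy subnet via pointwise compactness, and the verification that $\bigvee B_x=\1$ using continuity at the non-isolated point $x$), both of which you handle correctly.
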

\begin{proof}
(i)$\Rightarrow$(ii),(iii): Since order convergence on the order intervals is the restriction of order convergence on~$F$, without loss of generality we may assume that $F=I_{f}$, which is atomic, according to the discussion before Lemma~\ref{cba}. Hence, $F$ can be identified with an order dense linear sublattice of~$\R^{\Omega}$, for some set~$\Omega$. Order convergence on $\left[0,f\right]$ in $F$ is therefore the restriction of order convergence on an order interval in~$\R^{\Omega}$. By Proposition~\ref{pointwise}, this is the pointwise convergence, hence topological and pre-compact.\medskip

(ii) or (iii)$\Rightarrow$(i): Assume that (i) fails, that is $f\notin F_{a}$. As was mentioned before the theorem, there is $g\in F_{a}^{d}\cap\left(0,f\right]$. Let $T$ be a vector lattice isomorphism from $I_{g}$ onto a norm dense sublattice of $\Co\left(K\right)$, where $K$ is a compact Hausdorff space, such that $Tg=\1$. Since $g$ is in the atomless part of $F$, there are no atoms in~$I_{g}$, and so, according to Lemma~\ref{cba}, $K$ has no isolated points. Therefore, $\left[0,g\right]$ is order isomorphic to $P:=\left[\0,\1\right]\cap TI_{g}$. As order topologicity and order pre-compactness of $\left[0,f\right]$ would trickle down to $\left[0,g\right]$, in order to show that conditions (ii) and (iii) fail, it is enough to prove the corresponding statements for~$P$.

For pre-compactness, we apply the same argument as in Theorem~\ref{bain}. Consider an ideal $\Io$ of sets in $P$ which can be covered by a finite number of proper order intervals. According to Lemma~\ref{atomles}, this ideal is proper. Hence, due to Lemma~\ref{ultr}, there is an ultrafilter $\mathcal{U}$ on $P$ which is disjoint with~$\Io$. It follows that $\mathcal{U}$ contains no proper intervals, thus cannot be  order Cauchy.

Let us now prove that order convergence is not topological on~$P$. This time we will mimic the argument from Proposition~\ref{boot}. For every $x\in K$ let $G_{x}$ be the set of all functions in $P$ that vanish at~$x$. As $x$ is not isolated, it follows that $\bigvee G_{x}=\1$, and so $\bigcap\limits_{u\in G_{x}}\left[u,\1\right]=\left\{\1\right\}$. Therefore, the filter $\Fo_{x}$ on $P$ generated by $\left\{\left[u,\1\right],~u\in G_{x}\right\}$ order converges to~$\1$. To prove that order convergence is not topological it is enough to show that $\Fo:=\bigcap\limits_{x\in K}\Fo_{x}$ does not order converge to~$\1$. That is true because $\Fo$ does not contain any proper order intervals in~$P$. Indeed, if $\left[v,w\right]$ is a member of $\Fo$ then for every $x\in K$ we have that $\left[v,w\right]\in\Fo_{x}$. Hence, there is $u_{x}\in G_{x}$ such that $\left[u_{x},\1\right]\subseteq \left[v,w\right]$, from where $w=\1$, and $v\le u_{x}$. In particular, $v$ vanishes at~$x$; since $x$ was arbitrary we conclude that $v=\0$.
\end{proof}

\begin{corollary}\label{orlco}
For $f\in F_{+}$, the order interval $\left[0,f\right]$ is order locally pre-compact iff $f\in F_{a}$.
\end{corollary}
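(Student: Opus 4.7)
The implication $f \in F_a \Rightarrow [0,f]$ being order locally pre-compact is immediate: Theorem~\ref{topuo} gives that $[0,f]$ is in fact order pre-compact, which is a fortiori order locally pre-compact. For the converse I plan to argue by contradiction, adapting the vector-lattice analogue of the Boolean (iii)$\Rightarrow$(i) step in Theorem~\ref{bain}; the wrinkle is that Theorem~\ref{topuo} does not list local pre-compactness as an equivalent condition, so a fresh construction is required.

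Suppose $[0,f]$ is order locally pre-compact but $f \notin F_a$. The first step is to reduce to the atomless case: by the discussion preceding Lemma~\ref{cba} there exists $g \in F_a^d \cap (0,f]$, and the order interval $[0,g]$ inherits order local pre-compactness. The heart of the argument is to exhibit an order convergent filter on $[0,g]$ none of whose members is order pre-compact. My candidate is the filter $\Fo$ generated by the order intervals $\bigl\{\left[(1-\tfrac{1}{n})g,\, g\right] : n \in \N\bigr\}$; the Archimedean property gives $\bigcap_{n} [(1-\tfrac{1}{n})g,\, g] = \{g\}$, so $\Fo$ order converges to $g$.

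To close the argument, order local pre-compactness would produce an order pre-compact $A \in \Fo$, and by construction $A \supseteq [(1-\tfrac{1}{n})g,\, g]$ for some $n$. The translation $h \mapsto h - (1-\tfrac{1}{n})g$ is a lattice isomorphism from $[(1-\tfrac{1}{n})g,\, g]$ onto $[0,\tfrac{1}{n}g]$; since $\tfrac{1}{n}g \in F_a^d \setminus \{0\}$ is not in $F_a$, Theorem~\ref{topuo} says $[0,\tfrac{1}{n}g]$ is not order pre-compact, and hence neither is $[(1-\tfrac{1}{n})g,\, g]$. But order pre-compactness passes to subsets (any net in the smaller set is a net in the larger set, hence has an order Cauchy subnet), so $[(1-\tfrac{1}{n})g,\, g]\subseteq A$ would force it to be pre-compact, a contradiction. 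The main obstacle I anticipate is nothing more than this bookkeeping---verifying transfer of pre-compactness along the lattice isomorphism and along the inclusion---both routine from the net characterization of pre-compactness.
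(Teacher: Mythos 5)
Your proof is correct and rests on the same mechanism as the paper's: the Archimedean property produces an order convergent filter generated by shrinking intervals, any pre-compact member of that filter must contain one of the generating intervals, and that interval is lattice-isomorphic to a full-sized one whose pre-compactness is governed by Theorem~\ref{topuo}. The paper runs this more directly --- it shows that order local pre-compactness of $\left[0,f\right]$ already implies its order pre-compactness, using the order null filter generated by $\left\{\left[0,\tfrac{1}{n}f\right]\right\}_{n\in\N}$ and the scaling isomorphism $\left[0,\tfrac{1}{n}f\right]\simeq\left[0,f\right]$, so it needs neither the contradiction setup nor your preliminary reduction to $g\in F_{a}^{d}$.
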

\begin{proof}
In the light of the theorem, it is enough to show that $\left[0,f\right]$ order locally pre-compact iff it is order pre-compact. Clearly, order pre-compactness implies order local pre-compactness. For the converse, note that due to Archimedean property, the filter generated by $\left\{\left[0,\frac{1}{n}f\right]\mid n\in\N\right\}$ is order null, hence contains an order pre-compact set. This set has to contain $\left[0,\frac{1}{n}f\right]$ for some $n\in\N$, thus $\left[0,\frac{1}{n}f\right]$ is order pre-compact. As $\left[0,\frac{1}{n}f\right]$ and $\left[0,f\right]$ are isomorphic lattices, the latter is also order pre-compact.
\end{proof}

The ``global version'' of Theorem~\ref{bain} for vector lattices is an immediate consequence of Theorem~\ref{topuo}.

\begin{theorem}\label{topuo2}
The following conditions are equivalent:
\item[(i)] $F$ is atomic;
\item[(ii)] Every order interval in $F$ is order pre-compact;
\item[(iii)] Order convergence on order intervals of $F$ is topological.
\end{theorem}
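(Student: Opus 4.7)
The plan is to deduce this theorem directly from Theorem~\ref{topuo}, treating the translation from intervals $[0,f]$ to general intervals $[e,g]$ and the passage from a single $f$ to the global hypothesis as routine manoeuvres.

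First I would handle (i)$\Rightarrow$(ii) and (i)$\Rightarrow$(iii) together. Assume $F$ is atomic, so $F=F_{a}$. An arbitrary order interval $[e,g]\subseteq F$ with $e\le g$ is order-isomorphic to $[0,g-e]$ via the affine bijection $h\mapsto h-e$; this map and its inverse preserve suprema and infima of nets, so they preserve order convergence on intervals (and hence also pre-compactness and topologicity of the order convergence structure on the interval). Since $g-e\in F_{+}=F_{a}$, Theorem~\ref{topuo} gives that $[0,g-e]$ is order pre-compact and that order convergence on it is topological, and these properties transport back to $[e,g]$. This handles (i)$\Rightarrow$(ii) and (i)$\Rightarrow$(iii).

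For the converse directions I would argue by contrapositive, proving that if $F$ is not atomic then both (ii) and (iii) fail. Suppose $F$ is not atomic; by definition there exists $f>0$ such that $[0,f]$ contains no atoms. Such an $f$ cannot belong to $F_{a}$, because any member of $F_{a}$ is the supremum of a family of atoms, and in particular would dominate some atom, which would then lie in $[0,f]$. Applying Theorem~\ref{topuo} to this $f$, both (ii) and (iii) of that theorem fail for $[0,f]$; but $[0,f]$ is itself an order interval in $F$, so (ii) and (iii) of the present theorem fail as well.

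Since there is essentially no obstacle here — the translation to $[0,g-e]$ is automatic and the rest is quoting Theorem~\ref{topuo} — the only point I would be careful about is verifying cleanly that ``$F$ not atomic'' produces an element outside $F_{a}$, which is exactly the small argument above using that $F_{a}$ is generated by atoms. Everything else is bookkeeping.
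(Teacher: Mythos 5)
Your proposal is correct and matches the paper exactly: the paper gives no written proof, stating only that Theorem~\ref{topuo2} is an immediate consequence of Theorem~\ref{topuo}, and your argument supplies precisely the intended bookkeeping (translating $[e,g]$ to $[0,g-e]$, using that atomicity of $F$ is equivalent to $F=F_{a}$, and extracting from non-atomicity an $f>0$ with $f\notin F_{a}$).
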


\begin{corollary}
$F$ is atomic iff it is order locally pre-compact.
\end{corollary}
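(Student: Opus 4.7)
The plan is to deduce this corollary directly from Theorem~\ref{topuo2}, Corollary~\ref{orlco}, and the observation recorded in the preliminaries that order local pre-compactness is inherited by order intervals. Neither direction should require a new idea beyond unwinding definitions.

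For the forward direction, I would suppose $F$ is atomic and invoke Theorem~\ref{topuo2} to conclude that every order interval in $F$ is order pre-compact. To verify order local pre-compactness, take an order convergent net $(f_{\alpha})_{\alpha\in A}\to f$. Unwinding the definition of order convergence, I would extract nets $q_{\beta}\uparrow f$ and $r_{\beta}\downarrow f$ such that for each $\beta$ there is $\alpha_{0}\in A$ with $q_{\beta}\le f_{\alpha}\le r_{\beta}$ for all $\alpha\ge\alpha_{0}$. Fixing any single $\beta_{0}$, the corresponding tail of $(f_{\alpha})$ lies in the order interval $[q_{\beta_{0}},r_{\beta_{0}}]$, which is order pre-compact. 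Hence $F$ is order locally pre-compact.

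For the converse, I would assume $F$ is order locally pre-compact. Since this property passes to order intervals (as noted in the preliminaries), the interval $[0,f]$ is order locally pre-compact for every $f\in F_{+}$. Corollary~\ref{orlco} then forces $f\in F_{a}$ for every $f\in F_{+}$, so $F=F_{a}$, meaning $F$ is atomic.

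The closest thing to an obstacle is the forward step: one must observe that the sandwich definition of order convergence immediately supplies an order interval containing a tail of the net, which is precisely what converts ``every interval is order pre-compact'' into ``every convergent net has an order pre-compact tail.'' Once this is in hand, the argument is simply bookkeeping on top of Theorem~\ref{topuo2} and Corollary~\ref{orlco}.
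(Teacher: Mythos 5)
Your proposal is correct and follows essentially the same route as the paper: the converse via heredity of order local pre-compactness to intervals plus Corollary~\ref{orlco}, and the forward direction via Theorem~\ref{topuo2} together with the observation that every order convergent net has a tail contained in an order interval. Your explicit unwinding of the sandwich definition to produce that interval is just a spelled-out version of the same step the paper states in one line.
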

\begin{proof}
Assume that $F$ is order locally pre-compact. As order local pre-compactness is inherited by order intervals, it follows that  $\left[0,f\right]$ is order locally pre-compact for every $f\in F_{+}$. By Corollary \ref{orlco}, this means that $F$ is atomic.

Conversely, if $F$ is atomic then, according to~Theorem \ref{topuo2}, every order interval in $F$ is order pre-compact. As every order convergent net has a tail contained in an order interval, we conclude that $F$ is order locally pre-compact.
\end{proof}

We say that a net $\left(f_{\alpha}\right)_{\alpha\in A}\subseteq F$ \emph{unbounded order (uo)} converges to $f\in F$ if $\left|f_{\alpha}-f\right|\wedge h\to 0$, for every $h\in F_{+}$. For some background information on uo convergence, see \cite{gtx} and \cite{erz0}; for uo convergence on spaces of continuous functions, see \cite{et}. It is easy to deduce from Proposition~\ref{pointwise} that on $\R^{\Omega}$ uo convergence coincides with the pointwise convergence.

\begin{corollary}
$F$ is atomic iff uo convergence on $F$ is topological.
\end{corollary}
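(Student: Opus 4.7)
The plan is to combine Theorem~\ref{topuo2}, which characterises atomicity of $F$ via topologicity of order convergence on order intervals, with two bridging observations: (a) for atomic $F$, uo convergence on $F$ is the restriction of pointwise convergence on~$\R^{\Omega}$; and (b) on any order interval of~$F$, uo convergence coincides with order convergence. Fact (a) will yield the forward implication and fact (b) the converse.

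For sufficiency, I would use the embedding of an atomic $F$ as an order dense sublattice of $\R^{\Omega}$ recalled before Lemma~\ref{cba}, together with the standard invariance of uo convergence under passage between a vector lattice and an order dense sublattice (see \cite{gtx}), and the observation in the paragraph preceding the corollary that uo convergence on $\R^{\Omega}$ coincides with pointwise convergence. This exhibits uo convergence on $F$ as the restriction to $F$ of the product topology on $\R^{\Omega}$, hence as a topological convergence.

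For necessity, suppose uo convergence on $F$ is induced by some topology~$\tau$. Fix $g\in F_{+}$; my aim is to show that order convergence on $\left[0,g\right]$ is topological, which combined with translation invariance and Theorem~\ref{topuo2} forces $F$ to be atomic. The key step is that on $\left[0,g\right]$ uo convergence and order convergence agree: order convergence always implies uo convergence, while conversely, if $\left(f_{\alpha}\right)\subseteq \left[0,g\right]$ uo-converges to $f\in\left[0,g\right]$ (one checks $f\in\left[0,g\right]$ from uo-closedness of the positive cone and of the down-set of~$g$, both of which follow from continuity of lattice operations under uo convergence), then plugging $h:=g$ into the definition of uo convergence and using $\left|f_{\alpha}-f\right|\le g$ yields $\left|f_{\alpha}-f\right|=\left|f_{\alpha}-f\right|\wedge g\to 0$ in order, so $f_{\alpha}\to f$ in order. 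The subspace topology $\tau|_{\left[0,g\right]}$ therefore induces order convergence on $\left[0,g\right]$.

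The main subtlety to flag is that topologicity is a filter-theoretic condition, so both directions must ultimately be formulated for filters rather than just nets; this is routine via the net-filter correspondence highlighted in the preliminaries, but it is the one place where a careful reformulation is needed rather than merely invoking the intuition at the level of nets.
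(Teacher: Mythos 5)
Your proposal is correct and follows essentially the same route as the paper: the forward direction via the order dense embedding of an atomic $F$ into $\R^{\Omega}$ where uo convergence is pointwise (hence topological), and the converse via the coincidence of uo and order convergence on order intervals combined with Theorem~\ref{topuo2}. The extra details you supply (taking $h:=g$ in the definition of uo convergence, the filter-level reformulation) are correct elaborations of what the paper leaves implicit.
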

\begin{proof}
Necessity follows from the fact that on an order dense sublattice of $\R^{\Omega}$ uo convergence coincides with the pointwise convergence. Sufficiency follows from Theorem~\ref{topuo2} and the fact that uo convergence coincides with order convergence on the order intervals.
\end{proof}

\begin{remark}\label{ellis}
The preceding result already exists in the literature, but in an indirect way: it was proven in \cite{ellis} that the topologicity of the so-called alpha-convergence on an l-group is equivalent to the fact that the l-group is completely distributive; in \cite{weinberg} it was established that complete distributivity of an Archimedean l-group is equivalent to its atomicity, while in Archimedean l-groups alpha-convergence coincides with the uo convergence, according to \cite{pap2}. There is also a direct proof in \cite[Theorem~6.45]{taylor}, however, it is longer and more technical.
\qed\end{remark}

\begin{theorem}\label{topuo3}
For a vector lattice~$F$, the following conditions are equivalent:
\item[(i)] $F$ is Dedekind complete and atomic;
\item[(ii)] Every order interval in $F$ is order compact;
\item[(iii)] $F$ is order locally compact.
\end{theorem}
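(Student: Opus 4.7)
My plan is to prove the equivalences by verifying (i)$\Rightarrow$(ii)$\Rightarrow$(iii)$\Rightarrow$(i), leveraging the atomicity results already established in Theorem~\ref{topuo2} and the corollary after it, so that the only genuine new content lies in handling Dedekind completeness.

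For (i)$\Rightarrow$(ii), I would use the fact recalled before Lemma~\ref{cba}: if $F$ is Dedekind complete and atomic, then $F$ is isomorphic to an ideal in $\R^{\Omega}$ for some set~$\Omega$. Because $F$ is an ideal, any order interval $[e,f]$ in~$F$ coincides with the order interval $[e,f]$ computed in $\R^{\Omega}$. By Proposition~\ref{pointwise}, order convergence on such an interval is the compact product topology, so the interval is order compact.

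For (ii)$\Rightarrow$(iii), I would simply invoke the definition of filter order convergence: if $\Fo$ order converges to $p\in F$, then $\Fo$ already contains some order interval $[q,r]$, and by hypothesis this interval is order compact. Thus $F$ is order locally compact.

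The interesting step is (iii)$\Rightarrow$(i). Order local compactness entails order local pre-compactness, so atomicity of~$F$ follows from the corollary stated right after Theorem~\ref{topuo2}. To obtain Dedekind completeness I plan to adapt the convergence-group argument used for Boolean algebras at the end of Section~2: since the lattice and linear operations on an Archimedean vector lattice are continuous with respect to order convergence, $F$ is a convergence group, and so by \cite[Proposition~3.1.14]{bb} an order locally compact $F$ is complete as a convergence group, meaning every order Cauchy filter on~$F$ converges. Now fix $f\in F_{+}$ and let $\Fo$ be an order Cauchy filter in~$[0,f]$; viewed as a filter on~$F$ it is still order Cauchy, hence converges to some $p\in F$, and by part~(i) of the first lemma in the Preliminaries the limit~$p$ must lie in the interval $[0,f]$. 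Combined with the preliminaries' statement that an order interval in an Archimedean vector lattice is a complete lattice iff every order Cauchy net in it converges, this shows that $[0,f]$ is a complete lattice for every $f\in F_{+}$, i.e.\ $F$ is Dedekind complete.

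The one step I expect to require some care is the invocation of the convergence-group completeness result: I would need to confirm that ``Cauchy'' for $F$ as a convergence group with respect to order convergence coincides with ``order Cauchy'' as defined in the preliminaries, but this is immediate from the definition of $\Fo - \Fo$ already used there, so the obstacle is mostly bookkeeping rather than a genuine mathematical difficulty.
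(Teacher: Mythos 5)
Your proof is correct, and your (i)$\Rightarrow$(ii) and (ii)$\Rightarrow$(iii) coincide with the paper's arguments. Where you genuinely diverge is in closing the cycle: the paper never proves (iii)$\Rightarrow$(i) directly, but instead shows (iii)$\Rightarrow$(ii) by an Archimedean scaling trick --- the order null filter generated by $\left\{\left[0,\frac{1}{n}f\right]\mid n\in\N\right\}$ must contain an order compact set, which then contains some $\left[0,\frac{1}{n}f\right]$, and $\left[0,\frac{1}{n}f\right]$ is lattice isomorphic to $\left[0,f\right]$ --- and then obtains (ii)$\Rightarrow$(i) from Proposition~\ref{completel} (order compactness of the intervals makes them complete lattices) together with Theorem~\ref{topuo2}. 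You instead transplant to vector lattices the convergence-group argument that the paper uses only for Boolean algebras at the end of Section~2: local compactness of the convergence group $(F,+)$ gives completeness via \cite[Proposition~3.1.14]{bb}, an order Cauchy filter in $\left[0,f\right]$ then converges in $F$ to a limit which stays in the interval by the closedness and compatibility statements of the first lemma of the Preliminaries, and the criterion that an order interval is a complete lattice iff it is order complete yields Dedekind completeness; atomicity comes from the local pre-compactness corollary after Theorem~\ref{topuo2}. Both routes are sound. The paper's version is more self-contained (it needs only Proposition~\ref{completel} and the Archimedean property, with no external completeness theorem for convergence groups), while yours makes the parallel with Corollary~\ref{bainc} explicit and isolates exactly where the group structure enters; the point you flag --- that Cauchy in the convergence-group sense agrees with the order Cauchy notion of the Preliminaries --- is indeed immediate from the definition of the filter $\Fo-\Fo$.
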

\begin{proof}
(i)$\Rightarrow$(ii): As was mentioned above, if $F$ is Dedekind complete and atomic, it is isomorphic to an ideal in~$\R^{\Omega}$, for some set~$\Omega$. Then, every order interval in $F$ is isomorphic to an order interval in~$\R^{\Omega}$, which is order compact, according to Proposition~\ref{pointwise}.

(ii)$\Rightarrow$(i): By Proposition~\ref{completel}, every order interval in $F$ is a complete lattice, hence $F$ is Dedekind complete. It follows from Theorem~\ref{topuo2} that $F$ is atomic.

(ii)$\Rightarrow$(iii) follows from the fact that every convergent net has a tail contained in an order interval.

(iii)$\Rightarrow$(ii): Fix $f\in F_{+}$. The filter generated by $\left\{\left[0,\frac{1}{n}f\right]\mid n\in\N\right\}$ is order null, hence contains an order compact set. This set has to contain $\left[0,\frac{1}{n}f\right]$, for some $n\in\N$, thus $\left[0,\frac{1}{n}f\right]$ is order compact (we use here that order compactness is inherited by order intervals). As $\left[0,\frac{1}{n}f\right]$ and $\left[0,f\right]$ are isomorphic lattices, the latter is also order compact.
\end{proof}

The implication (i)$\Rightarrow$(ii) was observed in~\cite{obr}.

\begin{example}
The Banach lattice $c$ is an atomic uniformly complete lattice which fails to be Dedekind complete. Hence, Dedekind completeness cannot be relaxed to uniform completeness in the theorem.
\qed\end{example}

Let $S$ be a Tychonoff space. It follows immediately from Lemma~\ref{cba} and Theorem~\ref{topuo} that the unit ball of $\Co_{b}\left(S\right)$ is order pre-compact iff $S$ has a dense set of isolated points. We also get the following characterization for order compactness.

\begin{corollary}
If $K$ is a compact Hausdorff space, then the unit ball of $\Co\left(K\right)$ is order compact iff $K$ is homeomorphic to the Stone-\v{C}ech compactification of a discrete space.
\end{corollary}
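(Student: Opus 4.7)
Proof plan.

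The strategy is to propagate order compactness from the single interval $\left[-\1,\1\right]$ to every order interval of $\Co\left(K\right)$, apply Theorem~\ref{topuo3} to conclude that $\Co\left(K\right)$ is Dedekind complete and atomic, and then identify $K$ with $\beta D$, where $D$ is the set of isolated points of~$K$.

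For sufficiency, if $K\simeq\beta D$ for a discrete space~$D$, then $\Co\left(K\right)\cong\ell_{\8}\left(D\right)$ is an ideal of $\R^{D}$, hence Dedekind complete and atomic. Theorem~\ref{topuo3} then gives that every order interval of $\Co\left(K\right)$ is order compact; in particular, so is the unit ball $\left[-\1,\1\right]$.

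For necessity, suppose $\left[-\1,\1\right]$ is order compact. For each $M>0$, scalar multiplication by~$M$ is a lattice isomorphism $\left[-\1,\1\right]\to\left[-M\1,M\1\right]$, so $\left[-M\1,M\1\right]$ is also order compact. Any order interval $\left[f,g\right]\subseteq \Co\left(K\right)$ is contained in some $\left[-M\1,M\1\right]$, and since order compactness descends to order intervals (by the early lemma of the preliminaries), $\left[f,g\right]$ is order compact as well. Hence (ii) of Theorem~\ref{topuo3} holds for $F=\Co\left(K\right)$, so $\Co\left(K\right)$ is Dedekind complete and atomic. By Lemma~\ref{cba}, the atoms of $\Co\left(K\right)$ are the positive scalar multiples of $\1_{\left\{x\right\}}$ for isolated $x\in K$, and the set $D$ of such points is dense in~$K$.

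To finish, I invoke the representation of atomic Dedekind complete vector lattices recalled before Lemma~\ref{cba}: $\Co\left(K\right)$ embeds as an ideal of $\R^{D}$ via the restriction $f\mapsto f|_{D}$, and this map sends $\1$ to $\1_{D}$. Since $K$ is compact, the image lies inside $\ell_{\8}\left(D\right)$; since it is an ideal of $\R^{D}$ containing $\1_{D}$, it must contain all of $\ell_{\8}\left(D\right)$, so the two coincide. Thus $f\mapsto f|_{D}$ is a unit-preserving lattice isomorphism $\Co\left(K\right)\to \ell_{\8}\left(D\right)=\Co\left(\beta D\right)$. By the Kakutani representation of AM-spaces with unit---or equivalently, by observing that $K$ is the Stone space of its complete atomic Boolean algebra of clopen sets, which is isomorphic to $\Po\left(D\right)$ and has $\beta D$ as its Stone space---we conclude that $K\simeq\beta D$. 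The only real work is the propagation step from the unit ball to all order intervals; once that is in place, Theorem~\ref{topuo3} and standard representation theory do the rest.
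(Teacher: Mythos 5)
Your proof is correct, and the first half (sufficiency, and the propagation of order compactness from $\left[-\1,\1\right]$ to all order intervals by scaling plus the inheritance lemma, feeding into Theorem~\ref{topuo3} and Lemma~\ref{cba}) coincides with what the paper does, except that the paper leaves the propagation step implicit where you spell it out. The endgame differs: the paper deduces from Dedekind completeness via the Nakano--Stone theorem that $K$ is extremally disconnected, and then uses the purely topological fact that a compact Hausdorff extremally disconnected space with a dense set $U$ of isolated points is homeomorphic to $\beta U$. You instead realize $\Co\left(K\right)$ as an ideal of $\R^{D}$ via restriction to the isolated points, identify it with $\ell_{\8}\left(D\right)=\Co\left(\beta D\right)$ as a unital vector lattice, and conclude $K\simeq\beta D$ from the uniqueness in the Kakutani representation (equivalently, the lattice Banach--Stone theorem). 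Both routes are valid; the paper's is shorter once Nakano--Stone is quoted, while yours stays entirely inside the representation-theoretic framework already set up before Lemma~\ref{cba} and avoids introducing extremal disconnectedness. One caveat: your parenthetical alternative (recovering $K$ as the Stone space of $\mathrm{Clop}\left(K\right)\cong\Po\left(D\right)$) does require knowing that $K$ is zero-dimensional and that $\mathrm{Clop}\left(K\right)$ is complete, which in effect brings Nakano--Stone back in; your main route via Kakutani does not have this issue.
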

\begin{proof}
  Assume that $K=\beta\Omega$ for a set $\Omega$ viewed as a discrete topological space. Then $\Co\left(K\right)\simeq\ell_{\8}\left(\Omega\right)$, which is Dedekind complete and atomic. It follows from Theorem~\ref{topuo3} that the unit ball of $\Co\left(K\right)$ is order compact.

Conversely, if the unit ball of $\Co\left(K\right)$ is order compact, then one can deduce from Theorem~\ref{topuo3} that $\Co\left(K\right)$ is Dedekind complete and atomic. According to the comment above, $K$ has a dense set of isolated points. Moreover, it follows from Nakano-Stone theorem (see \cite[Corollary~1.51]{ab0}) that $K$ is extremally disconnected. It is straightforward that a compact Hausdorff extremally disconnected space with a dense set $U$ of isolated points is homeomorphic to~$\beta U$.
\end{proof}

Since the general strategy of proving the results of this section resembles one employed in the preceding section, it would be interesting to see if it is possible to directly deduce Theorems~\ref{topuo} and~\ref{topuo2} from Theorem~\ref{bain}. We succeed in doing that for Theorem~\ref{topuo3}. Recall that $f\in F_{+}$ is a \emph{component} of $e\in F_{+}$ if $f\wedge \left(e-f\right)=0$. Note that if $f$ is an atom, the only components of $f$ are $0$ and~$f$; moreover, this property characterizes atoms if $F$ is order complete. Note that components of a positive vectors form a Boolean algebra (this follows from \cite[Theorem~1.49]{ab}).

\begin{proof}[Alternative proof of Theorem~\ref{topuo3}]
In the current proof, Theorem~\ref{topuo2} was only used in proving the atomic clause in (ii)$\Rightarrow$(i). We will now deduce this from Corollary~\ref{bainc} instead. So suppose (ii) and let $e>0$. We need to show that there is an atom in $\left[0,e\right]$. Consider the Boolean algebra $P$ of all components of~$e$. We claim that $P$ is order compact.

Let $\left(f_{\alpha}\right)_{\alpha\in A}\subseteq P\subseteq\left[0,e\right]$. Since the latter is order compact, there is $g\in \left[0,e\right]$ such that $g_{\beta}\to g$, where $\left(g_{\beta}\right)_{\beta\in B}$ is a subnet of $\left(f_{\alpha}\right)_{\alpha\in A}$. In light of the completeness of $\left[0,e\right]$ this means that $g=\bigvee\limits_{\beta\in B}\bigwedge\limits_{\gamma\ge\beta}g_{\gamma}=\bigwedge\limits_{\beta\in B}\bigvee\limits_{\gamma\ge\beta}g_{\gamma}$. It is easy to see that a supremum or infimum (in $F$) of any collection of components of $e$ is again a component of~$e$, and so $P$ is complete, and $g_{\beta}\to g$ in~$P$. Since $\left(f_{\alpha}\right)_{\alpha\in A}$ was chosen arbitrarily, we conclude that $P$ is an order compact Boolean algebra.

Corollary~\ref{bainc} now guarantees that $P$ is atomic. Let $f\in P$ be an atom in~$P$. It is easy to see that $f$ only has two components in~$F$, and so is an atom in~$F$. Thus, $\left[0,e\right]$ contains an atom.
\end{proof}

\section*{Acknowledgements}

The first author was supported by project PID2021-122126NB-C32 funded by\linebreak MCIN/AEI/10.13039/501100011033 and by ``ERDF A way of making Europe'' and by Fundaci\'{o}n S\'{e}neca - Agencia de Ciencia y Tecnolog\'{\i}a de la Regi\'{o}n de Murcia (21955/PI/22).\medskip

The second author was supported by Pacific Institute for the Mathematical Sciences. The third author was supported by Natural Sciences and Engineering Research Council of Canada.\medskip

The authors would like to thank Banff International Research Station for hosting the ``Recent Advances in Banach lattices'' workshop, where the collaboration was initiated.

\begin{bibsection}
\begin{biblist}

\bib{acw}{article}{
   author={Abela, Kevin},
   author={Chetcuti, Emmanuel},
   author={Weber, Hans},
   title={On different modes of order convergence and some applications},
   journal={Positivity},
   volume={26},
   date={2022},
   number={1},
   pages={Paper No. 14, 22},
}

\bib{as}{article}{
   author={Abramovich, Yuri},
   author={Sirotkin, Gleb},
   title={On order convergence of nets},
   journal={Positivity},
   volume={9},
   date={2005},
   number={3},
   pages={287--292},
   issn={1385-1292},
}

\bib{ab0}{book}{
   author={Aliprantis, Charalambos D.},
   author={Burkinshaw, Owen},
   title={Locally solid Riesz spaces with applications to economics},
   series={Mathematical Surveys and Monographs},
   volume={105},
   edition={2},
   publisher={American Mathematical Society, Providence, RI},
   date={2003},
   pages={xii+344},
}

\bib{ab}{book}{
   author={Aliprantis, Charalambos D.},
   author={Burkinshaw, Owen},
   title={Positive operators},
   note={Reprint of the 1985 original},
   publisher={Springer, Dordrecht},
   date={2006},
   pages={xx+376},
}

\bib{aeg}{article}{
   author={Ayd\i n, Abdullah},
   author={Emelyanov, Eduard},
   author={Gorokhova, Svetlana},
   title={Full lattice convergence on Riesz spaces},
   journal={Indag. Math. (N.S.)},
   volume={32},
   date={2021},
   number={3},
   pages={658--690},
}

\bib{bb}{book}{
   author={Beattie, Ronald},
   author={Butzmann, Heinz-Peter},
   title={Convergence structures and applications to functional analysis},
   publisher={Kluwer Academic Publishers, Dordrecht},
   date={2002},
   pages={xiv+264},
}

\bib{bh}{article}{
   author={Bezhanishvili, Guram},
   author={Harding, John},
   title={On the proof that compact Hausdorff Boolean algebras are
   powersets},
   journal={Order},
   volume={33},
   date={2016},
   number={2},
   pages={263--268},
}

\bib{erz}{article}{
   author={Bilokopytov, Eugene},
   title={Characterizations of the projection bands and some order properties of the space of continuous functions},
   journal={\href{http://arxiv.org/abs/2211.11192}{arXiv:2211.11192}},
   date={2022},
}

\bib{erz0}{article}{
   author={Bilokopytov, Eugene},
   title={Locally solid convergences and order continuity of positive operators},
   journal={J. Math. Anal. Appl.},
   volume={528},
   date={2023},
   pages={Paper No. 127556},
}

\bib{et}{article}{
   author={Bilokopytov, Eugene},
   author={Troitsky, Vladimir G.},
   title={Order and uo-convergence in spaces of continuous functions},
   journal={Topology Appl.},
   volume={308},
   date={2022},
   pages={Paper No. 107999, 9},
}

\bib{ectv}{article}{
   author={Bilokopytov, Eugene},
   author={Conradie, Jurie},
   author={Troitsky, Vladimir G.},
   author={van der Walt, Jan Harm},
   title={Locally solid convergence structures},
   journal={in preparation},
}

\bib{dem}{article}{
   author={Dabboorasad, Yousef},
   author={Emelyanov, Eduard},
   author={Marabeh, Mohammad},
   title={Order convergence is not topological in infinite-dimensional
   vector lattices},
   journal={Uzbek Math. J.},
   date={2020},
   number={1},
   pages={159--166},
}

\bib{ellis}{book}{
   author={Ellis, John Thomas},
   title={Group topological convergence in completely distributive lattice-ordered groups},
   note={Thesis (Ph.D.)--Tulane University},
   publisher={ProQuest LLC, Ann Arbor, MI},
   date={1968},
   pages={56},
}

\bib{erne}{article}{
   author={Ern\'{e}, Marcel},
   title={Order-topological lattices},
   journal={Glasgow Math. J.},
   volume={21},
   date={1980},
   number={1},
   pages={57--68},
}

\bib{gtx}{article}{
   author={Gao, Niushan},
   author={Troitsky, Vladimir G.},
   author={Xanthos, Foivos},
   title={Uo-convergence and its applications to Ces\`aro means in Banach
   lattices},
   journal={Israel J. Math.},
   volume={220},
   date={2017},
   number={2},
   pages={649--689},
}

\bib{johnstone}{book}{
   author={Johnstone, Peter T.},
   title={Stone spaces},
   series={Cambridge Studies in Advanced Mathematics},
   volume={3},
   publisher={Cambridge University Press, Cambridge},
   date={1982},
   pages={xxi+370},
}

\bib{lipecky}{article}{
   author={Lipecki, Zbigniew},
   title={Compactness of order intervals in a locally solid linear lattice},
   journal={Colloq. Math.},
   volume={168},
   date={2022},
   number={2},
   pages={297--309},
}

\bib{obr}{book}{
   author={O'Brien, Michael},
   title={A Theory of Net Convergence with Applications to Vector Lattices},
   series={Ph.D. Thesis},
   publisher={University of Alberta},
   date={2021},
}

\bib{otv}{article}{
   author={O'Brien, Michael},
   author={Troitsky, Vladimir G.},
   author={van der Walt, Jan Harm},
   title={Net convergence structures with applications to vector lattices},
   journal={Quaest. Math.},
   volume={46},
   date={2023},
   number={2},
   pages={243--280},
}

\bib{pap1}{article}{
   author={Papangelou, Fredos},
   title={Order convergence and topological completion of commutative
   lattice-groups},
   journal={Math. Ann.},
   volume={155},
   date={1964},
   pages={81--107},
}

\bib{pap2}{article}{
   author={Papangelou, Fredos},
   title={Some considerations on convergence in abelian lattice-groups},
   journal={Pacific J. Math.},
   volume={15},
   date={1965},
   pages={1347--1364},
}

\bib{schaefer}{book}{
   author={Schaefer, Helmut H.},
   title={Banach lattices and positive operators},
   series={Die Grundlehren der mathematischen Wissenschaften},
   volume={Band 215},
   publisher={Springer-Verlag, New York-Heidelberg},
   date={1974},
   pages={xi+376},
}

\bib{sikorski}{book}{
   author={Sikorski, Roman},
   title={Boolean algebras.},
   series={},
   edition={3},
   publisher={Springer-Verlag New York, Inc., New York, },
   date={1969},
   pages={x+237},
}

\bib{taylor}{book}{
   author={Taylor, Mitchell},
   title={Unbounded convergences in vector lattices},
   series={M.Sc. Thesis},
   publisher={University of Alberta},
   date={2018},
}

\bib{vlad}{book}{
   author={Vladimirov, Denis A.},
   title={Boolean algebras in analysis},
   series={Mathematics and its Applications},
   volume={540},
   publisher={Kluwer Academic Publishers, Dordrecht},
   date={2002},
   pages={xxii+604},
}

\bib{weber}{article}{
   author={Weber, Hans},
   title={Group- and vector-valued $s$-bounded contents},
   conference={
      title={Measure theory, Oberwolfach 1983},
      address={Oberwolfach},
      date={1983},
   },
   book={
      series={Lecture Notes in Math.},
      volume={1089},
      publisher={Springer, Berlin},
   },
   date={1984},
   pages={181--198},
}

\bib{weinberg}{article}{
   author={Weinberg, Elliot Carl},
   title={Completely distributed lattice-ordered groups},
   journal={Pacific J. Math.},
   volume={12},
   date={1962},
   pages={1131--1137},
}

\bib{zsw}{article}{
   author={Zhang, Xiao-Dong},
   author={Schaefer, Helmut H.},
   author={Winkowska-Nowak, Kasia},
   title={Order continuity of locally compact Boolean algebras},
   journal={Positivity},
   volume={1},
   date={1997},
   number={4},
   pages={297--303},
}

\end{biblist}
\end{bibsection}

\end{document}